\documentclass[10pt, conference]{IEEEtran}


\usepackage{graphics} 
\usepackage{amsmath} 
\usepackage{amssymb}  
\usepackage{amsthm}
\usepackage{bm}
\usepackage{soul}

\usepackage{acronym}
\usepackage{paralist}
\usepackage{float}
\usepackage{color}
\usepackage{epstopdf}
\usepackage{multicol}
\usepackage{tikz}
\usepackage{hyperref}

\newtheorem{Theorem}{Theorem}

\newtheorem{Problem}{Problem}

\newtheorem{Remark}{Remark}

\newtheorem{Proposition}{Proposition}

\newtheorem{Corollary}{Corollary}

\newtheorem{Definition}{Definition}

\makeatletter
\hypersetup{colorlinks=true}
\AtBeginDocument{\@ifpackageloaded{hyperref}
  {\def\@linkcolor{blue}
  \def\@anchorcolor{red}
  \def\@citecolor{red}
  \def\@filecolor{red}
  \def\@urlcolor{red}
  \def\@menucolor{red}
  \def\@pagecolor{red}
\begingroup
  \@makeother\`%
  \@makeother\=%
  \edef\x{%
    \edef\noexpand\x{%
      \endgroup
      \noexpand\toks@{%
        \catcode 96=\noexpand\the\catcode`\noexpand\`\relax
        \catcode 61=\noexpand\the\catcode`\noexpand\=\relax
      }%
    }%
    \noexpand\x
  }%
\x
\@makeother\`
\@makeother\=
}{}}
\makeatother

\IEEEoverridecommandlockouts
\begin{document}

\title{\LARGE \bf Control-Lyapunov and Control-Barrier Functions based Quadratic Program for Spatio-temporal Specifications}
\author{Kunal Garg  \and Dimitra Panagou
\thanks{The authors are with the Department of Aerospace Engineering, University of Michigan, Ann Arbor, MI, USA; \texttt{\{kgarg,dpanagou\}@umich.edu}.}
\thanks{The authors would like to acknowledge the support of the Air Force Office of Scientific
Research under award number FA9550-17-1-0284.}}

\maketitle

\begin{abstract}
This paper presents a method for control synthesis under spatio-temporal constraints. First, we consider the problem of reaching a set $S$ in a user-defined or prescribed time $T$. We define a new class of control Lyapunov functions, called prescribed-time control Lyapunov functions (PT CLF), and present sufficient conditions on the existence of a controller for this problem in terms of PT CLF. Then, we formulate a quadratic program (QP) to compute a control input that satisfies these sufficient conditions. Next, we consider control synthesis under spatio-temporal objectives given as: the closed-loop trajectories remain in a given set $S_s$ at all times; and, remain in a specific set $S_i$ during the time interval $[t_i, t_{i+1})$ for $i = 0, 1, \cdots, N$; and, reach the set $S_{i+1}$ on or before $t = t_{i+1}$. We show that such spatio-temporal specifications can be translated into temporal logic formulas. We present sufficient conditions on the existence of a control input in terms of PT CLF and control barrier functions. Then, we present a QP to compute the control input efficiently, and show its feasibility under the assumptions of existence of a PT CLF. To the best of authors' knowledge, this is the first paper proposing a QP based method for the aforementioned problem of satisfying spatio-temporal specifications for nonlinear control-affine dynamics with input constraints. We also discuss the limitations of the proposed methods and directions of future work to overcome these limitations. We present numerical examples to corroborate our proposed methods. 
\end{abstract}

\section{Introduction}

Driving the state of a dynamical system to a given desired set is an important problem, particularly in the fields of robot motion planning and safety-critical control. Various approaches have been developed in past to accomplish this task. Model predictive control (MPC)-based methods \cite{saska2014motion,grancharova2015uavs}, rapidly-exploring random tree (RRT) based methods \cite{svenstrup2010trajectory,palmieri2016rrt,salzman2016asymptotically}, and combinations of them \cite{svenstrup2010trajectory} have been studied extensively in the literature. In addition, Lyapunov-based methods, such as vector fields  \cite{kovacs2016novel,han2019robust} and control Lyapunov functions (CLF) \cite{li2018formally,srinivasan2018control,ames2012control} are also popular, in part because these methods are inherently amenable to Lyapunov-based analysis. Control design for systems with input and state constraints is not a trivial task, as these constraints impose limitations on several aspects of the control synthesis. For example, spatial constraints requiring the system trajectories to be in some safe set at all times are common in safety-critical applications. Furthermore, temporal constraints pertaining to convergence within a prescribed time appear in time-critical applications where completion of a task is required within a given time interval. \textit{Spatio-temporal} specifications impose spatial as well as temporal or time constraints on the system trajectories.

\begin{figure}[!ht]
    \centering
    \includegraphics[ width=1\columnwidth,clip]{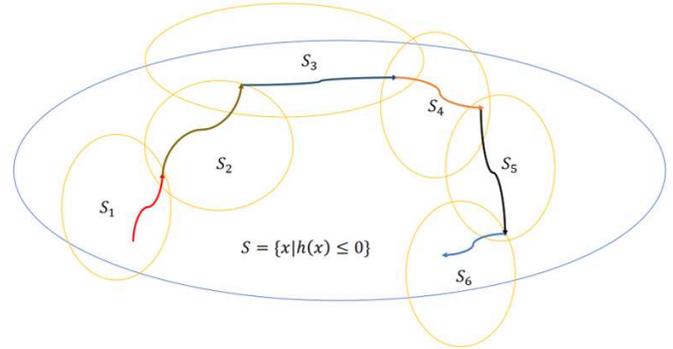}
    \caption{Motivating problem: The system trajectories need to visit the sets $S_i$, $i=1,\dots,6$ (orange regions) in a given time sequence, while always remaining in the set $S$ (blue region).}\label{fig: intro image}
\end{figure}

From practical point of view, considering safety constraints, e.g., avoiding collisions in multi-robot systems, avoiding static and dynamic obstacles, and in general, avoiding the unsafe regions in the state space, is crucial.  One of the most common methods of incorporating such spatial constraints on the system states is based on control barrier functions (CBF) \cite{ames2017control}. Barrier functions are used for the synthesis of safe controllers \cite{ames2017control,xu2015robustness} and barrier certificates are used as a verification tool to guarantee that the closed-loop trajectories remain safe at all times. The authors in \cite{barry2012safety} present sufficient conditions in terms of existence of a barrier certificate for forward-invariance of a given set, and propose a sum-of-squares formulation to find a Barrier certificate. In order to guarantee safety and convergence, a combination of CLFs and CBFs is used for control design \cite{ames2017control,ames2014control,romdlony2016stabilization}. In the CLF-CBF based controller, convergence is guaranteed due to the CLF and safety is guaranteed due to CBF.
\cite{tee2009barrier} utilizes Lyapunov-like barrier functions to guarantee asymptotic tracking of a time-varying output trajectory, while the system output always remains inside a given set. The authors in \cite{ames2017control,ames2014control} present conditions using zeroing barrier functions so that the set defined as $\mathcal C = \{x\;|\; h(x)\geq 0\}$, where $h(x)$ is a user-defined smooth function, is forward invariant. 

More recently, quadratic program (QP) based approaches have gained popularity for control synthesis; with this approach, the CLF and CBF conditions are formulated as inequalities that are linear in the control input \cite{li2018formally,srinivasan2018control,ames2017control,rauscher2016constrained}. These methods are suitable for real-time implementation as QPs can be solved very efficiently. The authors in \cite{ames2017control} combine the control performance objectives and safety objectives, represented using CLF and CBF, respectively, via a single QP. Authors in \cite{lindemann2019control} use CBF to encode signal-temporal logic (STL) based specifications and formulate a QP to compute the control input (see \cite{lindemann2019control} for details on STL-based  specifications for robot motion planning). The aforementioned work \cite{kovacs2016novel,han2019robust}, \cite{ames2012control}-\cite{rauscher2016constrained} concerns with designing control laws so that the reachability objectives, such as reaching a desired location or a desired goal set, are achieved as time goes to infinity, i.e., asymptotically. 

In contrast to asymptotic stability (AS), which pertains to convergence as time goes to infinity, finite-time stability (FTS) is a concept that guarantees convergence of solutions in finite time. In the seminal work \cite{bhat2000finite}, the authors introduce the necessary and sufficient conditions in terms of Lyapunov functions under which continuous, autonomous systems exhibit FTS. The authors in \cite{li2018formally} formulate a QP to ensure finite-time convergence of the closed-loop trajectories to a set $S = \{x\; |\; h(x)\leq 0\}$ with input constraints. Fixed-time stability (FxTS) \cite{polyakov2012nonlinear} is a stronger notion than FTS, where the time of convergence does not depend upon the initial conditions. More recently, the authors in \cite{holloway2019prescribed} used the notion or prescribed-time or user-defined time stability, where the time of convergence can be chosen by the user \textit{a priori}.

In most of the  aforementioned work, only one safety and one convergence objectives are considered. In this paper, we consider a multi-task problem of designing a control input. The considered objectives are of the following form: (i) the system trajectories should stay in a given set $S_s$ at all times, (ii) the system trajectories should stay in a set $S_i$ in the time-interval $[t_i, t_{i+1})$ for $i = 0, 1, 2, \cdots, N$, where $\{t_0, t_1,\cdots, t_N,\}$ is a user-defined time sequence, (iii) the trajectories should reach the set $S_{i+1}$ before time instant $t = t_{i+1}$, and (iv) the control input should satisfy control constraints at all times. We show that such spatio-temporal specifications can be translated into a STL formula. In \cite{lindemann2019control}, the authors consider the problem of generating controller to satisfy STL specifications under the assumption that the system dynamics are equivalent to a single-integrator dynamics. To the best of authors' knowledge, this is the first paper proposing a QP based method for the aforementioned problem of satisfying spatio-temporal specifications without making any assumptions on the system dynamics. We first study the problem of reaching a given set in a user-defined time $T$ for a general class of control-affine systems with input constraints. We extend the proposed formulation to guarantee that once the trajectories reach the desired set $S$, they stay in the set $S$ for all future times. We define the new notion of prescribed-time CLF (PT CLF), and use it to solve the problem of reaching a given goal set within a given prescribed time $T$. Then, we present sufficient conditions in terms PT CLF and CBF to guarantee that the given spatio-temporal specifications are met. Finally, we present a QP-based optimization problem that can compute a control input for the same, and show its feasibility under some mild conditions. In contrast to earlier work \cite{li2018formally,srinivasan2018control,ames2017control,lindemann2019control}, our proposed framework is able to accommodate spatio-temporal, i.e., both state and time, constraints in the presence of control input constraints. Furthermore, in contrast to the results in \cite{li2018formally,li2019finite}, where under the traditional notion of FTS, as defined in \cite{bhat2000finite}, the convergence time depends upon the initial conditions, the closed-loop system trajectories resulting from our controller reach the given set in a prescribed time that can be chosen arbitrarily and independently of the initial conditions.


The rest of the paper is organized as follows: In Section \ref{sec: math}, we present the notations used in the paper and background material on  various notions of finite-time stability. In Section \ref{sec: FxTS S}, we study the problem of reaching a set $S$ in a prescribed time and staying there for all future times. In Section \ref{sec: multiple FxTS}, we consider the general multi-task problem for spatio-temporal specifications and formulate a QP to find the control input. We show three numerical examples in Section \ref{sec: simulations} to corroborate our theoretical results. We discuss the limitations of the proposed methods and propose a direction to relax the assumptions used in deriving the main results and summarize our thoughts on future work in Section \ref{sec: discussion}. Finally, we present the conclusions in Section \ref{sec: conclusion}. 

\section{Mathematical Preliminaries}\label{sec: math}
    
\subsection{Notation}
$\mathbb R$ denotes the set of reals and $\mathbb R_+$ denotes the set of non-negative reals. The boundary of a closed set $S$ is denoted by $\partial S$ and its interior by $\textrm{int} (S) \triangleq S\setminus\partial S$. The Lie derivative of a function $h:\mathbb R^n\rightarrow\mathbb R$ along a vector field $f:\mathbb R^n\rightarrow\mathbb R^n$ is denoted as $L_fh \triangleq \frac{\partial h}{\partial x}f$. We use $\|x\|_p$ to denote the $p$-norm of the vector $x\in \mathbb R^n$ and simply use $\|\cdot\|$ to denote the Euclidean norm. 

\subsection{Preliminaries}
Consider the system: 
\begin{align}\label{ex sys}
\dot x(t) = f(x(t)),
\end{align}
where $x\in \mathbb R^n$ and $f: \mathbb R^n \rightarrow \mathbb R^n$ is continuous with $f(0)=0$. As defined in \cite{bhat2000finite}, the origin is said to be an FTS equilibrium of \eqref{ex sys} if it is Lyapunov stable and \textit{finite-time convergent}, i.e., for all $x(0) \in \mathcal N \setminus\{0\}$, where $\mathcal N$ is some open neighborhood of the origin, $\lim_{t\to T} x(t)=0$, where $T = T(x(0))<\infty$, depends upon the initial condition $x(0)$. The authors in \cite{polyakov2012nonlinear} presented the following result for FxTS, where the time of convergence does not depend upon the initial condition.

\begin{Theorem}[\cite{polyakov2012nonlinear}]\label{FxTS TH}
Suppose there exists a positive definite function $V$ for system \eqref{ex sys} such that 
\begin{align}
    \dot V(x) \leq -(aV(x)^p+bV(x)^q)^k,
\end{align}
with $a,b,p,q,k>0$, $pk<1$ and $qk>1$. Then, the origin of \eqref{ex sys} is FxTS with continuous settling time function 
\begin{align}
    T \leq \frac{1}{a^k(1-pk)} + \frac{1}{b^k(qk-1)}. 
\end{align}
\end{Theorem}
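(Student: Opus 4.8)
The plan is to reduce the vector inequality to a scalar comparison system and then bound the settling time of that scalar system by an improper integral, which I split at the level set $\{V=1\}$. First I would observe that positive definiteness of $V$ together with $\dot V(x)\leq -(aV(x)^p+bV(x)^q)^k\leq 0$ immediately yields Lyapunov stability of the origin via the standard Lyapunov theorem, so the only substantive task is to prove finite-time convergence and, crucially, to show that the convergence time admits a bound that does not depend on $x(0)$.

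Next I would invoke the comparison lemma. Writing $g(s)\triangleq (as^p+bs^q)^k$, which is continuous and strictly positive for $s>0$, I consider the scalar initial value problem $\dot y=-g(y)$ with $y(0)=V(x(0))$. Since $\dot V\leq -g(V)$ along the closed-loop trajectories, the comparison principle gives $0\leq V(x(t))\leq y(t)$ for as long as $y(t)>0$; hence the time at which $y$ reaches the origin upper-bounds the time at which $V(x(t))$, and therefore $x(t)$, reaches the origin. By separation of variables the settling time of the scalar system started from $V_0\triangleq V(x(0))$ is exactly
\begin{align}
T(V_0)=\int_0^{V_0}\frac{ds}{(as^p+bs^q)^k}.
\end{align}

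The heart of the argument is to bound this integral uniformly in $V_0$, which I would do by splitting the range of integration at $s=1$. On $(0,1]$ I discard the term $bs^q\geq 0$ to obtain $(as^p+bs^q)^k\geq a^k s^{pk}$, so that $\int_0^1 g(s)^{-1}\,ds\leq a^{-k}\int_0^1 s^{-pk}\,ds=\tfrac{1}{a^k(1-pk)}$, the integral converging precisely because $pk<1$. On $[1,\infty)$ I discard $as^p\geq 0$ to obtain $(as^p+bs^q)^k\geq b^k s^{qk}$, so that $\int_1^\infty g(s)^{-1}\,ds\leq b^{-k}\int_1^\infty s^{-qk}\,ds=\tfrac{1}{b^k(qk-1)}$, the integral converging precisely because $qk>1$. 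Adding the two pieces and using $T(V_0)\leq\int_0^\infty g(s)^{-1}\,ds$ gives the claimed bound, and because this bound is independent of $V_0$ the origin is fixed-time stable.

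Finally, continuity of the settling-time function follows from the fact that it is the integral of a continuous, positive integrand together with continuous dependence of $V(x(0))$ on $x(0)$. The main obstacle I anticipate is the rigorous justification of the comparison step and of the improper integral: $g$ need not be Lipschitz at the origin (where $pk<1$ forces a fractional power), so uniqueness of the scalar solution must be handled with care, and the convergence of both improper integrals hinges entirely on the exponent conditions $pk<1$ and $qk>1$ that are exposed by the splitting at $s=1$.
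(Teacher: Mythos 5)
Your proof is correct and is essentially the standard argument: the comparison-lemma reduction to the scalar system $\dot y=-(ay^p+by^q)^k$, followed by splitting the settling-time integral $\int_0^{\infty}\frac{ds}{(as^p+bs^q)^k}$ at $s=1$ (dropping $bs^q$ below and $as^p$ above), produces exactly the two terms $\frac{1}{a^k(1-pk)}$ and $\frac{1}{b^k(qk-1)}$ uniformly in $V(x(0))$, with the exponent conditions $pk<1$ and $qk>1$ guaranteeing convergence of each piece. Note that the paper itself states this theorem only as a citation of \cite{polyakov2012nonlinear} and gives no proof, so there is nothing internal to compare against; your argument matches the proof in that reference, and your attention to the non-Lipschitz behavior of the right-hand side at the origin (forward uniqueness of $y\equiv 0$ after the settling time, with comparison applied only while $y>0$) is exactly the care the comparison step requires.
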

If the settling-time $T$ can be chosen a priori by the user, then the origin is called as user-defined or prescribed-time stable \cite{holloway2019prescribed}.

\section{Prescribed-time Set Reachability}\label{sec: FxTS S}
In this section, we consider the problem of reaching a set $S = \{x\; |\; h(x)\leq 0\}$ in a user-defined or prescribed time $T$, where $h:\mathbb R^n\rightarrow\mathbb R$ is a user-defined function. Consider the system 
\begin{align}\label{cont aff sys}
    \dot x(t) = f(x(t)) + g(x(t))u, \; x(t_0) = x_0, 
\end{align}
where $x\in \mathbb R^n$ is the state-vector, $f:\mathbb R^n\rightarrow \mathbb R^n$, $g:\mathbb R^n\rightarrow\mathbb R^{n\times m}$ are system vector fields, and $u\in \mathbb R^m$ is the control input. The problem statement can be formally written as:

\begin{Problem}\label{P reach S}
Design a control input $u(t)\in \mathcal U = \{v\; |\; A_uv\leq b_u\}$, so that the closed-loop trajectories of \eqref{cont aff sys} reach the set $S = \{x\; |\; h(x)\leq 0\}$ in a prescribed time $T$, where $h(x)$ is a user-defined continuously differentiable function, and $A_u\in \mathbb R^{l\times m}, b_u\in\mathbb R^l$ are user-defined matrices. 
\end{Problem}

Input constraints of the form $u(t)\in \mathcal U = \{v\; |\; A_uv\leq b_u\}$ are very commonly considered in the literature \cite{ames2017control}. Now, we present sufficient conditions for existence of a control input $u$ that solves Problem \ref{P reach S}. First, we define a new class of CLF with prescribed-time convergence guarantees:

\begin{Definition}\label{def: CLF PT}
\textbf{PT CLF-$S$}: A continuously differentiable function $V:\mathbb R^n\rightarrow\mathbb R$ is called PT CLF-$S$ for \eqref{cont aff sys} with parameters $a_1,a_2,b_1,b_2$, if it is positive definite with respect to the set $S$, i.e., $V(x) >0$ for all $x\notin S$, $V(x) = 0$ for all $x\in \partial S$,  and the following holds: 
\begin{align}\label{fxts h ineq def}
    \inf_{u\in \mathcal U}\{L_fV+L_gVu\}\leq -a_1V^{b_1}-a_2V^{b_2},
\end{align}
for all $x\notin \textrm{int}(S)$, where $a_1, a_2>0$, $b_1>1$ and $0<b_2<1$.
satisfy
\begin{align}
     \frac{1}{a_1(b_1-1)} + \frac{1}{a_2(1-b_2)}\leq T,
\end{align}
where $T>0$ is the prescribed time.
\end{Definition}

\noindent Definition \ref{def: CLF PT} provides a CLF that guarantees convergence of the solutions to the origin within prescribed time $T$. Note that the traditional notions of CLF \cite{li2018formally} and exponential CLF \cite{ames2014rapidly} are special cases of Definition \ref{def: CLF PT}, with $a_1 = a_2 = 0$, and $a_2 = 0$, $b_1 = 1$, respectively. Based on this definition, we can readily state the following result. 


\begin{Theorem}\label{FxTS reach set S}
If there exist constants $\alpha_1, \alpha_2>0$, $\gamma_1>1$ and $0<\gamma_2<1$, satisfying 
\begin{align}\label{T a1 a2 set S}
    \frac{1}{\alpha_1(\gamma_1-1)} + \frac{1}{\alpha_2(1-\gamma_2)}\leq T,
\end{align}
such that $h$ is PT CLF-$S$ with parameters $\alpha_1,\alpha_2,\gamma_1,\gamma_2$, then there exists $u(t)\in \mathcal U$, such that the closed-loop trajectories of \eqref{cont aff sys} reach the set $S$ within prescribed time $T$ for all initial conditions $x(0)\notin S$.
\end{Theorem}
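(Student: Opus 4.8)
The plan is to read off from the PT CLF-$S$ property a pointwise admissible control that makes $h$ decay at the prescribed rate, and then to collapse the reachability claim to a one-dimensional comparison argument. First I would specialize Definition \ref{def: CLF PT} to $V=h$ with parameters $(\alpha_1,\alpha_2,\gamma_1,\gamma_2)$, so that \eqref{fxts h ineq def} becomes $\inf_{u\in\mathcal U}\{L_f h + L_g h\,u\}\le -\alpha_1 h^{\gamma_1}-\alpha_2 h^{\gamma_2}$ for all $x\notin\textrm{int}(S)$. Since $\mathcal U=\{v\mid A_u v\le b_u\}$ is polyhedral and $u\mapsto L_f h+L_g h\,u$ is affine, whenever this infimum is finite it is attained (at a vertex of $\mathcal U$); hence for each $x\notin\textrm{int}(S)$ there is $u^\star(x)\in\mathcal U$ with $L_f h(x)+L_g h(x)\,u^\star(x)\le -\alpha_1 h(x)^{\gamma_1}-\alpha_2 h(x)^{\gamma_2}$.

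Next I would track $V(t):=h(x(t))$ along the closed-loop solution of \eqref{cont aff sys} under $u^\star$. While $x(t)\notin\textrm{int}(S)$, equivalently $V(t)\ge 0$, the chain rule gives $\dot V\le -\alpha_1 V^{\gamma_1}-\alpha_2 V^{\gamma_2}$, which is precisely the bound of Theorem \ref{FxTS TH} with $k=1$, $(a,p)=(\alpha_2,\gamma_2)$ and $(b,q)=(\alpha_1,\gamma_1)$, now read on the scalar signal $V$. To make this rigorous I would apply the comparison lemma: with $y$ solving $\dot y=-\alpha_1 y^{\gamma_1}-\alpha_2 y^{\gamma_2}$, $y(0)=h(x_0)>0$, one has $0\le V(t)\le y(t)$ until $V$ first vanishes. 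The comparison field is separable, so the time for $y$ to reach $0$ equals $\int_0^{y(0)}\frac{d\sigma}{\alpha_1\sigma^{\gamma_1}+\alpha_2\sigma^{\gamma_2}}$; splitting at $\sigma=1$ and bounding the denominator below by $\alpha_1\sigma^{\gamma_1}$ on $[1,y(0)]$ and by $\alpha_2\sigma^{\gamma_2}$ on $[0,1]$ gives the initial-condition-independent estimate $\tfrac{1}{\alpha_1(\gamma_1-1)}+\tfrac{1}{\alpha_2(1-\gamma_2)}$, which is $\le T$ by \eqref{T a1 a2 set S}. Therefore $V(t)=h(x(t))$ reaches $0$ no later than $T$, i.e.\ $x(t)$ enters $\partial S\subseteq S$ within the prescribed time for every $x_0\notin S$, as claimed.

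The rate estimate above is routine; the step I expect to be delicate is the well-posedness that underlies it. The pointwise selection $u^\star(\cdot)$ need not be continuous, and the comparison field $-\alpha_1 y^{\gamma_1}-\alpha_2 y^{\gamma_2}$ is non-Lipschitz at $0$ (the feature that buys finite- rather than infinite-time convergence), so I would appeal to existence of Carath\'eodory/Filippov solutions and note that uniqueness of the scalar comparison solution is needed only on $\{y>0\}$, where the field is smooth; once $h$ hits $0$ the claim is already proved and the subsequent dynamics are irrelevant to Problem \ref{P reach S}. I would also flag that Theorem \ref{FxTS TH} is stated for positive definiteness about the origin, whereas here $h$ is positive definite with respect to the whole set $S$; the comparison route sidesteps this by operating on the scalar $V(t)=h(x(t))$ and never requiring $S$ to be a point. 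In the sequel this regularity gap is effectively closed by the QP, whose minimizer furnishes a concrete admissible $u^\star(x)$ realizing the decay \eqref{fxts h ineq def}.
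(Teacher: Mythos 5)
Your proposal is correct and takes essentially the same route as the paper's proof: choose $V=h$, use the PT CLF-$S$ inequality to obtain $\dot V \leq -\alpha_1 V^{\gamma_1}-\alpha_2 V^{\gamma_2}$ along the closed loop, and conclude from the settling-time bound $\frac{1}{\alpha_1(\gamma_1-1)}+\frac{1}{\alpha_2(1-\gamma_2)}\leq T$ of \eqref{T a1 a2 set S}. The only difference is one of detail rather than strategy: where the paper simply invokes Theorem \ref{FxTS TH}, you re-derive its $k=1$ case via the separable comparison integral, and you additionally flag the measurable-selection/Carath\'eodory well-posedness issue and the fact that Theorem \ref{FxTS TH} is stated for positive definiteness about the origin rather than a set---legitimate gaps the paper's proof glosses over, which your comparison argument closes.
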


\begin{proof}
Choose the candidate Lyapunov function $V(x) = h(x)$ for $x\notin S$. From the definition of set $S$, we know that $x\notin S$ implies $h(x)>0$, which implies $V(x)$ is positive definite with respect to the set $S$. Now, since \eqref{fxts h ineq def} holds for all $x\notin S$ for some $u\in \mathcal U$, we have that 
\begin{align*}
    \dot V = \dot h \leq-\alpha_1h^{\gamma_1}-\alpha_2h^{\gamma_2}.
\end{align*}
Hence, using Theorem \ref{FxTS TH}, we obtain that for all $t\geq T_0$, $V(x(t)) =  h(x(t))= 0$ where $T_0 \leq \frac{1}{\alpha_1(\gamma_1-1)} + \frac{1}{\alpha_2(1-\gamma_2)} \overset{\eqref{T a1 a2 set S}}{\leq} T$. This implies that the closed-loop trajectories reach the set $S$ within prescribed time $T$.  
\end{proof}

Theorem \ref{FxTS reach set S} deals with reaching the set $S$ before time $t = T$. Next, we present a result that guarantees that the closed-loop trajectories reach the set $S$ within a prescribed time $T$ and stay there for all future times using \eqref{eq: ZCBF}. 

\begin{Corollary}\label{Set S reach and inv}
If there exist constants $\alpha_1, \alpha_2>0$, $\gamma_1>1$ and $0<\gamma_2<1$ satisfying \eqref{T a1 a2 set S}, such that the following holds
\begin{align}\label{fxts h ineq new}
    \inf_{u\in \mathcal U}\{L_fh(x) + L_gh(x)u\}\leq & -\alpha_1\max\{0,h(x)\}^{\gamma_1}\nonumber\\
    & -\alpha_2\max\{0,h(x)\}^{\gamma_2},
\end{align}
for all $x$, then the closed-loop trajectories of \eqref{cont aff sys} reach the set $S$ within prescribed time $T<\infty$ for all initial conditions $x(0)\in \mathbb R^n$, and stay there for all future times.
\end{Corollary}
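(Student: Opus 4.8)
The plan is to exploit the two regimes created by the $\max\{0,\cdot\}$ operator in \eqref{fxts h ineq new} and to handle reachability and forward invariance separately. First I would observe the key dichotomy: for every $x\notin S$ we have $h(x)>0$, so $\max\{0,h(x)\}=h(x)$ and \eqref{fxts h ineq new} collapses to exactly the PT CLF-$S$ inequality \eqref{fxts h ineq def} used in Theorem \ref{FxTS reach set S}; conversely, for every $x\in S$ we have $h(x)\le 0$, so $\max\{0,h(x)\}=0$ and \eqref{fxts h ineq new} reduces to $\inf_{u\in\mathcal U}\{L_fh+L_ghu\}\le 0$. Thus the single inequality \eqref{fxts h ineq new} simultaneously encodes a fixed-time reaching condition outside $S$ and a Nagumo-type (zeroing-barrier) boundary condition on $\partial S$.

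For the reachability claim I would fix any $x(0)\notin S$, select at each instant a $u\in\mathcal U$ attaining the bound in \eqref{fxts h ineq new}, and take $V=h$. As long as the trajectory remains outside $S$ the first regime gives $\dot V\le -\alpha_1 V^{\gamma_1}-\alpha_2 V^{\gamma_2}$, which is precisely the Polyakov form of Theorem \ref{FxTS TH} with $a=\alpha_2,\,p=\gamma_2,\,b=\alpha_1,\,q=\gamma_1,\,k=1$ (so that $pk=\gamma_2<1$ and $qk=\gamma_1>1$). Applying Theorem \ref{FxTS TH} yields a settling time $T_0\le \frac{1}{\alpha_1(\gamma_1-1)}+\frac{1}{\alpha_2(1-\gamma_2)}\overset{\eqref{T a1 a2 set S}}{\le} T$, hence $h(x(T_0))=0$ and the trajectory reaches $S$ within the prescribed time. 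This is the same computation already performed in the proof of Theorem \ref{FxTS reach set S}, so no new estimate is required here.

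The genuine content of the corollary is the \emph{staying} claim, which I would establish through the single comparison function $W(t)\triangleq\max\{0,h(x(t))\}\ge 0$. Differentiating almost everywhere gives $\dot W=\dot h$ on $\{x:h(x)>0\}$ and $\dot W=0$ on $\{x:h(x)\le 0\}$, so in both regimes \eqref{fxts h ineq new} yields $\dot W\le -\alpha_1 W^{\gamma_1}-\alpha_2 W^{\gamma_2}\le 0$. If $x(t_1)\in S$ for some $t_1$ (namely $t_1=T_0$ from the reaching step, or $t_1=0$ when $x(0)\in S$), then $W(t_1)=0$; since $W\ge 0$ and $\dot W\le 0$, monotonicity forces $W(t)\equiv 0$, i.e.\ $h(x(t))\le 0$, for all $t\ge t_1$. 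Thus the trajectory never escapes $S$ once it has entered, and combining this with the reaching step proves the statement for every initial condition $x(0)\in\mathbb R^n$ (those in $S$ reaching trivially at $T_0=0$ and remaining by invariance).

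I expect the main obstacle to be well-posedness rather than the core inequalities. The condition \eqref{fxts h ineq new} asserts only the pointwise \emph{existence} of a feasible $u$, so to speak rigorously of closed-loop trajectories I must argue that a (at least measurable, ideally locally Lipschitz) selection $u(x)$ realizing the bound exists and yields absolutely continuous solutions of \eqref{cont aff sys}. A second delicate point is the non-smoothness of $\max\{0,h\}$ at $\partial S$, which I would circumvent precisely by the almost-everywhere differentiation of $W$ above together with a Nagumo/invariance argument instead of a classical chain rule; this is exactly the role of the zeroing-barrier condition \eqref{eq: ZCBF} in ruling out escape through the boundary. By contrast, the fixed-time reaching estimate is immediate from Theorem \ref{FxTS TH} and requires no additional effort.
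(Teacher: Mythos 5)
Your proposal is correct and follows the same overall skeleton as the paper's proof: outside $S$ the inequality \eqref{fxts h ineq new} reduces to the PT CLF-$S$ condition, so reachability within $T$ follows from Theorem \ref{FxTS TH} exactly as in Theorem \ref{FxTS reach set S} (your constant matching $a=\alpha_2$, $p=\gamma_2$, $b=\alpha_1$, $q=\gamma_1$, $k=1$ is the right one), while the staying claim rests on the fact that the right-hand side of \eqref{fxts h ineq new} vanishes on $S$. Where you genuinely diverge is in how you close the invariance half: the paper argues pointwise on the boundary --- for $h(x)=0$ one gets $\dot h(x)\le 0$, so ``$h$ is non-increasing on the boundary'' and forward invariance is asserted in a single Nagumo-style sentence --- whereas you introduce the comparison function $W(t)=\max\{0,h(x(t))\}$, show $\dot W\le 0$ almost everywhere in both regimes, and conclude $W\equiv 0$ after the reaching time by monotonicity. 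Your route is slightly longer but more robust: it dispenses with the implicit appeal to a Nagumo/invariance theorem (which, to be airtight, needs regularity of the closed-loop vector field to rule out solutions escaping through $\partial S$), it handles possible chattering on the boundary, and it treats the case $x(0)\in S$ explicitly, which the corollary's statement covers but the paper's proof leaves implicit; the cost is the a.e.-differentiation technicality for the nonsmooth $W$, which you correctly flag, together with the measurable-selection issue for realizing the infimum in \eqref{fxts h ineq new} as an actual closed-loop input --- a gap common to your argument and the paper's alike, not a defect introduced by your approach.
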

\begin{proof}
Note that once the trajectories of \eqref{cont aff sys} reach the set $S$, we have $h(x) = 0$. From \eqref{fxts h ineq new}, we obtain that for $h(x) = 0$, $\dot h(x)\leq 0$. Hence, $h(x)$ is non-increasing on the boundary of the set $S$, and hence, the set $S$ is forward invariant under the control input $u$ satisfying \eqref{fxts h ineq new}. So, the closed-loop trajectories stay in the set $S$ once they reach the set $S$. 
\end{proof}

As pointed out in \cite{li2018formally}, QPs can be solved very efficiently and can be used for real-time implementation. So, we present a QP-based formulation to compute the control input that satisfies the conditions of Corollary \ref{Set S reach and inv}. 

\begin{Theorem}\label{QP P1 Theorem}
Let the solution to the following QP 
\begin{subequations}\label{QP u P1}
\begin{align}
\min_{v, \alpha_1, \alpha_2} \;& \frac{1}{2}\|v\|^2 \\
    \textrm{s.t.} \; L_fh(x) + L_gh(x)v  & \leq  -\alpha_1\max\{0,h(x)\}^{\gamma_1}\nonumber\\
    & -\alpha_2\max\{0,h(x)\}^{\gamma_2}\\
    \frac{2}{T}\leq  \alpha_1&(\gamma_1-1), \label{a1 T const 1} \\
    \frac{2}{T}\leq \alpha_2 & (1-\gamma_2), \label{a2 T const 1} \\
    A_uv & \leq b_u,
\end{align}
\end{subequations}
where $\gamma_1>1$ and $0<\gamma_2<1$, is denoted as $[\bar \alpha_1 \; \bar \alpha_2 \; \bar v(t)]$. Then, the control input defined as $u(t) =\bar v(t)$ satisfies \eqref{fxts h ineq new}, and $\alpha_1 = \bar\alpha_1, \alpha_2 = \bar \alpha_2$ satisfy \eqref{T a1 a2 set S}. 
\end{Theorem}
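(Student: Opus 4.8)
The plan is to show that the statement is an almost immediate consequence of the way the constraints of the QP \eqref{QP u P1} were constructed: the feasible region of \eqref{QP u P1} is contained in the set of triples $(\alpha_1,\alpha_2,v)$ for which both \eqref{fxts h ineq new} and \eqref{T a1 a2 set S} hold. Since $[\bar\alpha_1\;\bar\alpha_2\;\bar v]$ is a minimizer, it is in particular a feasible point, and therefore satisfies every constraint of \eqref{QP u P1}. (Here I take for granted that a solution exists; feasibility of the QP itself is argued separately.) I would then carry out the argument in two independent steps, one for each target condition.

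For \eqref{fxts h ineq new}, I would first invoke the last constraint $A_u\bar v\leq b_u$ to conclude that $\bar v\in\mathcal U$. The first constraint of \eqref{QP u P1} is precisely
\begin{align*}
    L_fh(x)+L_gh(x)\bar v \leq -\bar\alpha_1\max\{0,h(x)\}^{\gamma_1}-\bar\alpha_2\max\{0,h(x)\}^{\gamma_2}.
\end{align*}
Since the infimum in \eqref{fxts h ineq new} is taken over all $u\in\mathcal U$ and $\bar v$ is one such admissible input, the infimum is bounded above by the value attained at $\bar v$; chaining this with the displayed inequality yields \eqref{fxts h ineq new} with $u=\bar v$. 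This monotonicity of the infimum is the only point that deserves care, since \eqref{fxts h ineq new} is phrased in terms of an infimum whereas the QP pins down one particular admissible input.

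For \eqref{T a1 a2 set S}, I would rearrange the two linear time constraints. Constraint \eqref{a1 T const 1} gives $\bar\alpha_1(\gamma_1-1)\geq \tfrac{2}{T}$, hence $\tfrac{1}{\bar\alpha_1(\gamma_1-1)}\leq \tfrac{T}{2}$; likewise \eqref{a2 T const 1} gives $\tfrac{1}{\bar\alpha_2(1-\gamma_2)}\leq \tfrac{T}{2}$. Adding these two inequalities produces exactly \eqref{T a1 a2 set S}. The split of the single reciprocal-sum bound into two separate $\tfrac{T}{2}$ budgets is precisely what renders the otherwise nonlinear condition \eqref{T a1 a2 set S} expressible as a pair of constraints that are linear in $\alpha_1,\alpha_2$ (for fixed $\gamma_1,\gamma_2$), and hence amenable to the QP machinery.

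Overall I expect no genuine obstacle here: once the constraints are read correctly, both conclusions drop out. The only items requiring attention are the infimum-versus-attained-value step above and the observation that the conclusion is conditional on the QP admitting a solution in the first place, a point that is deferred to the separate feasibility analysis.
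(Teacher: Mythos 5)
Your proposal is correct and follows essentially the same route as the paper's own proof: read off feasibility of the optimal point, note that the first and last QP constraints yield \eqref{fxts h ineq new} (with $\bar v\in\mathcal U$ bounding the infimum from above), and sum the two reciprocal bounds $\tfrac{1}{\bar\alpha_1(\gamma_1-1)}\leq\tfrac{T}{2}$ and $\tfrac{1}{\bar\alpha_2(1-\gamma_2)}\leq\tfrac{T}{2}$ to obtain \eqref{T a1 a2 set S}. If anything, you are slightly more careful than the paper, which loosely calls the first constraint ``equivalent'' to \eqref{fxts h ineq new}, whereas you correctly observe that only the one-directional infimum bound is needed.
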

\begin{proof}
First, note that the optimization variables in \eqref{QP u P1} are $\alpha_1, \alpha_2$ and $v$. The objective of the optimization problem \eqref{QP u P1} is quadratic in $v$ and the constraints are linear in the optimization variables. Hence, \eqref{QP u P1} is a QP. Now, first constraints of \eqref{QP u P1} is equivalent to \eqref{fxts h ineq new}. Constraints \eqref{a1 T const 1}-\eqref{a2 T const 1} make sure that $\alpha_1, \alpha_2$ are positive and the time constraint \eqref{T a1 a2 set S} is satisfied:
\begin{align*}
\frac{1}{\alpha_1(\gamma_1-1)} + \frac{1}{\alpha_2(1-\gamma_2)} & \overset{\eqref{a1 T const 1}-\eqref{a2 T const 1}}{\leq} T.
\end{align*}
The last constraint in \eqref{QP u P1} implies that $u\in \mathcal U$. Hence, the solution to \eqref{QP u P1} satisfies \eqref{T a1 a2 set S} and \eqref{fxts h ineq new}.
\end{proof}


\section{Control Synthesis for STL specifications}\label{sec: multiple FxTS}
\subsection{Problem formulation}
In this section, we consider a general problem of designing control input for \eqref{cont aff sys} such that the closed-loop trajectories satisfy spatio-temporal specifications defined as follows. Let $h_i(x)$ be the function defining the set $S_i = \{x\; |\; h_i(x)\leq 0\}$ for $i\in \Sigma = \{0,1, 2, \cdots, N\}$ such that $S_i\bigcap S_{i+1}\neq \emptyset$ for all $0\leq i\leq N-1$. Let $S_s = \{x\; |\; h(x)\leq 0\}$ be such that $S_s\bigcap S_0\neq \emptyset$. Let $[t_0, t_1), [t_1, t_2), \cdots, [t_N, t_{N+1})$ be the set of intervals such that $t_{i+1}-t_i\geq \bar T$ for some $0<\bar T<\infty$, for all $0\leq i\leq N-1$. Assume that the functions $h(x), h_i(x)$ are continuously differentiable. We consider the following problem. 

\begin{Problem}\label{P S Si}
Assume $x(t_0)\in S_0\bigcap S_s$. Design a control input $u(t)\in \mathcal U = \{u\; |\; A_uu\leq b_u\}$, so that the closed-loop trajectories satisfy the following for all $i\in \Sigma:$
\begin{subequations}\label{P1 const}
\begin{align}
    x(t)\in S_s\; & \forall \; t\geq t_0,\\
    x(t)\in S_i\; & \forall\; t\in [t_i, t_{i+1}).\label{s_i inv constr}
\end{align}
\end{subequations}
\end{Problem}
Note that \eqref{s_i inv constr} inherently requires that $x(t_{i+1})\in S_{i+1}$ for $1\leq i\leq N-1$, i.e., the trajectories should reach the set $S_{i+1}$ on or before $t = t_{i+1}$, while staying in the set $S_i$ for all times $t\in [t_i, t_{i+1})$. Problem \ref{P S Si} can be readily translated into temporal logic formulas for the form of specifications that are encountered, for instance, in mission planning problems. The STL specifications, given by formula $\phi$ include the following semantics (see \cite{lindemann2019control} for more details):
\begin{itemize}
    \item $(x,t)\models\phi \iff h(x(t))\leq 0$;
    \item $(x,t)\models\lnot \phi \iff h(x(t)) > 0$;
    \item $(x,t)\models \phi_1\land \phi_2 \iff (x,t)\models \phi_1\land (x,t)\models \phi_2$;
    \item $(x,t)\models G_{[a,b]}\phi \iff h(x(t))\leq 0, \forall t\in [a,b]$;
    \item $(x,t)\models F_{[a,b]}\phi \iff \exists t\in [a,b]$ such that $h(x(t))\leq 0$,
\end{itemize}
where $\phi = \textrm{true}$ if $h(x)\leq 0$ and $\phi = \textrm{false}$ if $h(x)>0$. So, Problem \ref{P S Si} can be written in the STL semantics as follows.
\begin{Problem}
Design control input $u\in \mathcal U$ so that the closed-loop trajectories satisfy 
\begin{align}
   (x,t)\models & G_{[t_0, t_N]}\phi_s\land G_{[t_0, t_1]}\phi_0\land F_{[t_0, t_1]}\phi_1\land G_{[t_1, t_2]}\phi_1 \nonumber\\
    & \land F_{[t_1, t_2]}\phi_2\land \cdots \land G_{[t_{N-1},t_N]}\phi_{N-1}\land F_{[t_{N-1}, t_N]}\phi_N,
\end{align}
where $\phi\;(\textrm{respectively},\; \phi_i) = \textrm{true}$ if $h(x)\;(\textrm{respectively},\; h_i(x))\leq 0$, and false otherwise.
\end{Problem}

\begin{Remark}
If the STL-based specifications satisfy certain assumptions, then these specifications can be posed as an instance of Problem \ref{P S Si}. For illustration, consider Example 2 from \cite{lindemann2019control}. The STL specification $\phi= \phi_1\land\phi_2$, where $\phi_1 = F_{[5,15]}(\|x-[10 \; 0]^T\|\leq 5)$ and $\phi_2 = G_{[5,15]}(\|x-[10 \; 5]^T\|\leq 10)$, means that the closed loop trajectories should reach the set $S_1 = \{x\; |\; \|x-[10 \; 5]^T\|\leq 10\}$ on or before $t = 5$ sec, remain in the set $S_1$ for $t\in [5, 15]$ and reach the set $S_2 = \{x\; |\; \|x-[10 \; 0]^T\|\leq 5\}$ on or before $t = 15$. Since $S_1\bigcap S_2\neq\emptyset$, we can use the problem set of Problem \ref{P S Si} to address these specifications. In Section \ref{sec: simulations}, we present an example on how to address problems that do not satisfy the setup of Problem \ref{P S Si}, i.e., if the functions $h(x)$ or $h_i(x)$ are non-smooth or $S_i\bigcap S_{i+1} = \emptyset$, e.g., the case study in \cite{lindemann2017robust}.
\end{Remark}

\subsection{Main results}
In this work, we use the conditions of zeroing CBF (ZCBF) to ensure safety or forward invariance of the safe set $S_s$. The ZCBF is defined by the authors in \cite{ames2017control} as following. 
\begin{Definition}
A continuously differentiable function $B:\mathbb R^n\rightarrow \mathbb R$ is called as ZCBF for \eqref{cont aff sys} for set $S_s$ if $B(x)<0$ for $x\in \textrm{int}(S_s)$, $B(x) = 0$ for $x\in \partial S_s$, and there exists a continuous, increasing function $\alpha:\mathbb R_+\rightarrow\mathbb R_+$, with $\alpha(0) = 0$, such that 
\begin{align}\label{eq: ZCBF}
    \inf_{u\in \mathcal U}\{L_fB(x)+L_gB(x)u\}\leq \alpha(-B(x)),
\end{align}
for all $x\in S_s$.
\end{Definition}
\noindent One special case of \eqref{eq: ZCBF} is 
\begin{align}\label{eq: ZCBF spec}
    \inf_{u\in \mathcal U}\{L_fB(x)+L_gB(x)u\}\leq -\rho B(x),
\end{align}
for some $\rho\in \mathbb R$. In \cite[Remark 6]{ames2017control}, the authors mention that $B$ is is a ZCBF if \eqref{eq: ZCBF spec} holds with $\rho>0$. We note that this restriction is not needed for guaranteeing safety. We present sufficient conditions in terms of PT CLF-ZCBF like inequalities for existence of control input $u$ that solves Problem \eqref{P S Si}.
\begin{Theorem}\label{P1 solve suff}
If there exist parameters $a_{i1}, a_{i2}$, $\gamma_{i1}>1$ and $0<\gamma_{i2}<1$ for $i\in \Sigma$ such that
\begin{align}\label{bar T const}
     \bar T\geq \max_{i\in \Sigma} \Big\{\frac{1}{a_{i1}(\gamma_{i1}-1)} + \frac{1}{a_{i2}(1-\gamma_{i2})}\Big\},
\end{align}
and a control input $u(t)$ such that the following holds
\begin{subequations}\label{u const P1}
\begin{align}
    & \inf_{u\in \mathcal U}\{L_fh(x) + L_gh(x)u \} \leq -\lambda_hh(x), \label{S inv}\\
    & \inf_{u\in \mathcal U}\{L_fh_i(x) + L_gh_i(x)u\}\leq -\lambda_ih_i(x) ,\label{Si inv const}\\
    & \inf_{u\in \mathcal U}\{L_fh_{i+1} + L_gh_{i+1}u\}\leq  -a_{i1}\max\{0,h_{i+1}\}^{\gamma_{i1}}\nonumber\\
     & \quad \quad \quad \quad \quad \quad \quad \quad \quad \quad \quad \;\; -a_{i2}\max\{0,h_{i+1}\}^{\gamma_{i2}}, \label{Si+1 FxTS reach}
\end{align}
\end{subequations}
for $t\in [t_i, t_{i+1})$, for each $i\in \Sigma$, then, under the effect of control input $u$, the closed-loop trajectories satisfy \eqref{P1 const}. 
\end{Theorem}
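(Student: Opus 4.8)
The plan is to establish the two requirements in \eqref{P1 const} separately: the always-safety claim $x(t)\in S_s$ is handled once and for all by the zeroing-barrier inequality \eqref{S inv}, while the interval-wise claim \eqref{s_i inv constr} is handled by induction over $i\in\Sigma$, exploiting the fact that \eqref{u const P1} is stated to hold for a single feasible $u(t)$ on each interval. Throughout I would use the standard comparison/Nagumo forward-invariance argument already invoked in the proof of Corollary \ref{Set S reach and inv}.

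First I would dispatch the safety requirement. Since $x(t_0)\in S_s$ gives $h(x(t_0))\le 0$, and \eqref{S inv} yields $\dot h(x)=L_fh+L_ghu\le-\lambda_h h(x)$ along the closed-loop trajectory, the function $h$ acts as a ZCBF on $S_s$: on $\partial S_s$, where $h(x)=0$, we get $\dot h(x)\le 0$ irrespective of the sign of $\lambda_h$, so $h$ cannot cross zero from below. The forward-invariance argument of Corollary \ref{Set S reach and inv} then gives $h(x(t))\le 0$, i.e. $x(t)\in S_s$, for all $t\ge t_0$.

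Next I would prove \eqref{s_i inv constr} by induction on $i$, with induction hypothesis $x(t_i)\in S_i$; the base case $x(t_0)\in S_0$ is given. Fix $[t_i,t_{i+1})$. Invariance of $S_i$ on this interval follows exactly as for $S_s$: \eqref{Si inv const} gives $\dot h_i\le-\lambda_i h_i$, so from $h_i(x(t_i))\le 0$ the comparison argument keeps $h_i(x(t))\le 0$, hence $x(t)\in S_i$ for all $t\in[t_i,t_{i+1})$. For reaching $S_{i+1}$, I would observe that wherever $x\notin S_{i+1}$ (so $h_{i+1}>0$ and $\max\{0,h_{i+1}\}=h_{i+1}$), the inequality \eqref{Si+1 FxTS reach} reduces to $\dot h_{i+1}\le-a_{i1}h_{i+1}^{\gamma_{i1}}-a_{i2}h_{i+1}^{\gamma_{i2}}$, so $h_{i+1}$ is a PT CLF-$S_{i+1}$. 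Applying Theorem \ref{FxTS reach set S} (equivalently Theorem \ref{FxTS TH} with $V=h_{i+1}$), the trajectory reaches $S_{i+1}$ within settling time $T_i\le\frac{1}{a_{i1}(\gamma_{i1}-1)}+\frac{1}{a_{i2}(1-\gamma_{i2})}$, which by \eqref{bar T const} is at most $\bar T\le t_{i+1}-t_i$. Thus $S_{i+1}$ is reached at or before $t_{i+1}$, and since \eqref{Si+1 FxTS reach} also forces $\dot h_{i+1}\le 0$ once $h_{i+1}=0$, $S_{i+1}$ stays invariant afterwards; together with continuity of the trajectory and closedness of $S_{i+1}$ this yields $x(t_{i+1})\in S_{i+1}$, closing the induction.

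The step I expect to be most delicate is the interplay, on a single interval, between holding invariance \eqref{Si inv const} of $S_i$ and simultaneously driving the state into $S_{i+1}$ via \eqref{Si+1 FxTS reach} with one control $u(t)$; here this is circumvented because the theorem hypothesizes such a $u$ (feasibility being deferred to the QP), so the argument only has to propagate the set memberships forward. The remaining care points are technical bookkeeping: applying the fixed-time settling estimate of Theorem \ref{FxTS TH} on a finite half-open interval rather than on $[0,\infty)$, and ensuring at each switching instant $t_{i+1}$ that the endpoint membership $x(t_{i+1})\in S_{i+1}$ (obtained from reaching-plus-invariance, consistent with $S_i\cap S_{i+1}\ne\emptyset$) correctly serves as the base point for the next interval's invariance argument.
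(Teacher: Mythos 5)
Your proof is correct and takes essentially the same route as the paper's: forward invariance of $S_s$ and of each $S_i$ via the ZCBF-type inequalities \eqref{S inv} and \eqref{Si inv const}, prescribed-time convergence to $S_{i+1}$ on each interval via Theorem \ref{FxTS reach set S} combined with the time bound \eqref{bar T const} and $t_{i+1}-t_i\geq \bar T$, and propagation of the membership $x(t_{i+1})\in S_s\bigcap S_{i+1}$ to the next interval. Your explicit induction bookkeeping and the closedness/continuity remark at the switching instants merely make precise what the paper's terser ``using the same arguments for each $i$'' leaves implicit.
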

\begin{proof}
Since $x(t_0)\in S_s\bigcap S_0$, we have that $h(x(t_0))\leq 0$. 
Note that \eqref{S inv} is independent of $i$, i.e., it is needed that \eqref{S inv} holds for all $t\in [t_0, t_{N+1})$. If the control input satisfies \eqref{S inv}, then the set $S_s$ is forward invariant (Corollary \ref{Set S reach and inv}). 
Similarly, using \eqref{Si inv const}, we conclude that for $t\in [t_0, t_1)$, the set $S_0$ is forward-invariant. Finally, for $x\notin S_1$, from \eqref{Si+1 FxTS reach}, we obtain $\dot h_{1} \leq -a_{01}h_{1}^{\gamma_{01}}-a_{02}h_{1}^{\gamma_{02}}$.
Using Theorem \ref{FxTS reach set S}, we obtain that the closed-loop trajectories satisfy $h_1(x(t)) = 0$ for $t\geq t_0+T_0$, where $T_0\leq \bar T$. 
Hence, we obtain that $ t_0+T_0\leq t_0+\bar T\leq t_1$, which implies that the closed-loop trajectories reach the set $S_1$ on or before $t=t_1$. Also, once trajectories reach the set $S_1$, we have that $\dot h_1 \leq 0$, i.e., the set $S_1$ is forward invariant. Hence, the closed-loop trajectories reach the set $S_1$ on or before $t = t_1$ and stay in the set $S_1$ till $t = t_1$. So, at $t = t_1$, we have $x(t_1)\in S_s\bigcap S_1$. 

Using the same arguments for each $i = 1, 2, \cdots, N-1$, we obtain that the closed-loop trajectories satisfy \eqref{P1 const} under the effect of control input $u$ satisfying \eqref{u const P1}. 
\end{proof}

Note that inequalities \eqref{S inv} and \eqref{Si inv const} are ZCBF conditions that render the set $S$ and $S_i$ forward-invariant, while \eqref{Si+1 FxTS reach} is the PT CLF condition that guarantees fixed-time convergence to set $S_{i+1}$, as well forward invariance of the set $S_{i+1}$ once trajectories reach the set $S_{i+1}$. 

\begin{Remark}
In contrast to \cite{lindemann2019control}, where the authors assume that $g(x)g(x)^T$ is positive definite, we do not make any assumptions on the system vector fields $f$ and $g$. In fact, for $m<n$, this condition is not satisfied for \eqref{cont aff sys}. Furthermore, \cite{lindemann2019control} does not consider any input constraints. 
\end{Remark}

Lastly, we formulate a QP based optimization problem in order to find the parameters $a_{i1}, a_{i2}, \lambda_h, \lambda_i$ for each $i\in \Sigma$ and the control input $u(t)$ so that \eqref{bar T const} and \eqref{u const P1} are satisfied. Consider the optimization problem 

\begin{subequations}\label{QP u P2}
\begin{align}
\min_{v, a_{i1}, a_{i2}, \lambda_h, \lambda_i} \; \frac{1}{2}\|v\|^2 & \\
    \textrm{s.t.} \; L_fh(x) + L_gh(x)v + &\lambda_hh(x) \leq 0,\label{safety set Sh}\\
    L_fh_i(x) + L_gh_i(x)v+ &\lambda_ih_i(x)\leq 0 ,\label{safety set Si}\\
    L_fh_{i+1} + L_gh_{i+1}v & \leq -a_{i1}\max\{0,h_{i+1}\}^{\gamma_{i1}}\nonumber\\
    & -a_{i2}\max\{0,h_{i+1}\}^{\gamma_{i2}},\label{conv const}\\
    A_uv\leq b_u, & \label{input bound const}\\
    \frac{2}{\bar T} \leq a_{i1} & (\gamma_{i1}-1), \label{a1 T const}\\
    \frac{2}{\bar T} \leq a_{i2} & (1-\gamma_{i2}), \label{a2 T const}
\end{align}
\end{subequations}
where $\gamma_{i1}>1$ and $0<\gamma_{i2}<1$. Let the solution to \eqref{QP u P2} is denoted as $[\bar a_{i1} \; \bar a_{i2} \; \bar \lambda_h\; \bar \lambda_i \;\bar v_i]$ for $t\in [t_i, t_{i+1})$, for $i\in \Sigma$. We can now state the main result of the paper. 

\begin{Theorem}
If the functions $h_i$ are PT CLF-$S_i$ for all $i\in \Sigma$, then, the solution to \eqref{QP u P2} exists, and the control input defined as 
\begin{align}\label{u input form}
    u(t) = \bar v_i(t), \; t\in [t_i, t_{i+1}), \; i\in \Sigma
\end{align}
satisfies \eqref{u const P1}, and $a_{i1} = \bar a_{i1}, a_{i2} = \bar a_{i2}$ satisfy \eqref{bar T const}. 
\end{Theorem}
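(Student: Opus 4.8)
The plan is to verify the three assertions of the statement separately: that \eqref{QP u P2} is genuinely a QP, that it is feasible at every state the closed-loop trajectory visits so that $\bar v_i(t)$ is well defined, and that the optimizer then satisfies \eqref{u const P1} while $\bar a_{i1},\bar a_{i2}$ satisfy \eqref{bar T const}. The first point I would dispatch exactly as in the proof of Theorem \ref{QP P1 Theorem}: once the state $x$ is fixed, the quantities $\max\{0,h_{i+1}\}^{\gamma_{i1}}$ and $\max\{0,h_{i+1}\}^{\gamma_{i2}}$ are constants, so the objective is quadratic in $v$ and each of \eqref{safety set Sh}--\eqref{a2 T const} is affine in the decision variables $(v,a_{i1},a_{i2},\lambda_h,\lambda_i)$.

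The heart of the argument is feasibility, and I would prove it by exhibiting an explicit feasible point for each $t\in[t_i,t_{i+1})$. Since $h_{i+1}$ is PT CLF-$S_{i+1}$, Definition \ref{def: CLF PT} furnishes gains $a_{i1},a_{i2}>0$ and a control $v^{\star}\in\mathcal U$ for which the convergence inequality \eqref{fxts h ineq def}, equivalently \eqref{Si+1 FxTS reach} and hence \eqref{conv const}, holds; this single choice discharges \eqref{conv const} and the input bound \eqref{input bound const} at once. To meet the split time constraints \eqref{a1 T const}--\eqref{a2 T const} I would take the balanced values $a_{i1}=\tfrac{2}{\bar T(\gamma_{i1}-1)}$ and $a_{i2}=\tfrac{2}{\bar T(1-\gamma_{i2})}$, which satisfy \eqref{a1 T const}--\eqref{a2 T const} with equality and whose induced settling bound is exactly $\bar T$, and check that the PT CLF inequality still supplies a $v^{\star}\in\mathcal U$ realizing \eqref{conv const} with these gains. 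It then remains to satisfy the safety constraints \eqref{safety set Sh}--\eqref{safety set Si} with the same $v^{\star}$. Because $\lambda_h,\lambda_i$ are sign-unconstrained, whenever $x$ lies in the interior of $S_s$ (respectively $S_i$), so that $h(x)<0$ (respectively $h_i(x)<0$), I can pick $\lambda_h$ (respectively $\lambda_i$) large enough that $\lambda_h h(x)$ dominates $L_fh+L_gh\,v^{\star}$, rendering \eqref{safety set Sh} (respectively \eqref{safety set Si}) valid; a feasible point therefore exists throughout the interiors.

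The step I expect to be the main obstacle is feasibility on the boundaries $\partial S_s$ and $\partial S_i$, where $h(x)=0$ (respectively $h_i(x)=0$) and the multiplier term vanishes, so that \eqref{safety set Sh} collapses to the pure control constraint $L_fh+L_gh\,v\le 0$ that $v^{\star}$ need not obey. Reconciling this with the convergence requirement \eqref{conv const} and the input bound \eqref{input bound const} simultaneously is where an additional compatibility property is genuinely used: I would argue it from the standing assumptions $S_i\bigcap S_{i+1}\neq\emptyset$ and $S_s\bigcap S_0\neq\emptyset$ together with the PT CLF property, which drives $h_{i+1}$ strictly downward and so points the flow into rather than out of the safe sets. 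Absent such a property the three requirements may be in conflict, and I would flag this explicitly as the boundary case that motivates the limitations deferred to Section \ref{sec: discussion}; a secondary, milder subtlety is that Definition \ref{def: CLF PT} guarantees only the summed settling bound $\tfrac{1}{a_{i1}(\gamma_{i1}-1)}+\tfrac{1}{a_{i2}(1-\gamma_{i2})}\le\bar T$, whereas \eqref{a1 T const}--\eqref{a2 T const} demand the even split, so the compatibility of the balanced gains above with \eqref{conv const} is exactly what must be verified.

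Finally, assertions (ii) and (iii) follow by inspection once feasibility is in hand. Constraints \eqref{safety set Sh}, \eqref{safety set Si}, and \eqref{conv const} are literally the inequalities \eqref{S inv}, \eqref{Si inv const}, and \eqref{Si+1 FxTS reach} of \eqref{u const P1} evaluated at $\lambda_h=\bar\lambda_h$, $\lambda_i=\bar\lambda_i$, so the minimizer $u(t)=\bar v_i(t)$ from \eqref{u input form} satisfies \eqref{u const P1}. Moreover, adding \eqref{a1 T const} to \eqref{a2 T const} at the optimum yields $\tfrac{1}{\bar a_{i1}(\gamma_{i1}-1)}+\tfrac{1}{\bar a_{i2}(1-\gamma_{i2})}\le\bar T$ for every $i\in\Sigma$, which is precisely \eqref{bar T const}, completing the chain that feeds Theorem \ref{P1 solve suff} and thereby guarantees the specification \eqref{P1 const}.
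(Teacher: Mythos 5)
Your proposal follows the same route as the paper's own proof: verify the QP structure, establish feasibility from the PT CLF hypothesis by exhibiting a feasible point, then read \eqref{u const P1} and \eqref{bar T const} off the active constraints; your last two steps match the paper in substance. The difference is that the paper's feasibility argument is a two-line assertion: it takes the $v$ furnished by the PT CLF-$S_{i+1}$ property for \eqref{conv const}--\eqref{input bound const} and then simply claims that ``one can choose $\lambda_h,\lambda_i$ satisfying \eqref{safety set Sh}--\eqref{safety set Si} and $a_{i1},a_{i2}$ satisfying \eqref{a1 T const}--\eqref{a2 T const}.'' The two obstacles you isolate are therefore genuine gaps in the published argument, not artifacts of your reconstruction. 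The multiplier trick works only while $h(x)<0$ and $h_i(x)<0$; on $\partial S_s$ or $\partial S_i$ the terms $\lambda_h h(x)$ and $\lambda_i h_i(x)$ vanish, so \eqref{safety set Sh}--\eqref{safety set Si} become hard constraints on $v$ that the PT CLF input $v^{\star}$ need not satisfy, and some compatibility between the CBF conditions, $\mathcal U$, and the PT CLF is tacitly assumed. Likewise, Definition \ref{def: CLF PT} guarantees only the summed settling bound, while \eqref{a1 T const}--\eqref{a2 T const} force each term below $\bar T/2$; since the $v$ certified by the PT CLF inequality is tied to the particular gains for which \eqref{fxts h ineq def} holds, the paper's silent decoupling of $v$ from $(a_{i1},a_{i2})$ is unjustified, and you are right that the balanced gains must be re-checked against \eqref{conv const}.

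One caution: your proposed resolution of the boundary case is itself not a proof. The PT CLF property of $h_{i+1}$ controls only $\dot h_{i+1}$ and says nothing about the sign of $L_fh + L_gh\,v^{\star}$ on $\partial S_s$, nor of $L_fh_i + L_gh_i\,v^{\star}$ on $\partial S_i$; the nonemptiness conditions $S_i\bigcap S_{i+1}\neq\emptyset$ and $S_s\bigcap S_0\neq\emptyset$ are conditions on the sets, not on the vector fields, and cannot rule out the flow exiting a safe set while $h_{i+1}$ decreases. So flagging the conflict is correct, but ``points the flow inward'' does not follow from the stated hypotheses. In fairness, the theorem is best read under the implicit assumption that the constraint set of \eqref{QP u P2} is nonempty at every state visited --- an assumption that neither the paper's proof nor your proposal derives from the PT CLF hypothesis alone, which is consistent with the limitations the authors themselves defer to Section \ref{sec: discussion}.
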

\begin{proof}
First, note that the optimization variables in \eqref{QP u P2} are $a_{i1}, a_{i2}, \lambda_h, \lambda_i$ and $v$. The constraints are linear in these variables, while the objective function is quadratic in $v$. Hence, the optimization problem \eqref{QP u P2} is a QP. It is easy to show that the problem \eqref{QP u P2} is feasible if $h_{i+1}$ is a PT CLF-$S_{i+1}$ with respect to $a_{i1},a_{i2}$ satisfying \eqref{bar T const}. To see why this is true, note that there exists $v$ satisfying \eqref{conv const}-\eqref{input bound const}, since $h_{i+1}$ is a PT CLF-$S_{i+1}$. With this $v$, one can choose $\lambda_h, \lambda_i$ satisfying \eqref{safety set Sh}-\eqref{safety set Si}, respectively, and $a_{i1}, a_{i2}$ satisfying \eqref{a1 T const}-\eqref{a2 T const}, respectively. Hence, there exists a solution to the QP \eqref{QP u P2}. Note that the initial four constraint are equivalent to the three inequalities in \eqref{u const P1}. Next, \eqref{a1 T const}-\eqref{a2 T const} imply that $\frac{1}{a_{i1}(\gamma_{i1}-1)} +\frac{1}{a_{i2}(1-\gamma_{i2})}\leq \bar T$,
so, \eqref{bar T const} is also satisfied. Hence, with the last two constraints in \eqref{QP u P1}, all the conditions of Theorem \ref{P1 solve suff} are satisfied. Hence, the input defined as \eqref{u input form} satisfies \eqref{u const P1}. 
\end{proof}

The constraints of the QP \eqref{QP u P2} change at time instant $t_i$ for $1\leq 1\leq N$. Note that we assume that the functions $h(x), h_i(x)$ are continuously differentiable to be able to use \eqref{u const P1} or \eqref{QP u P2}. In Section \ref{sec: discussion}, we discuss how to overcome this limitation. 

\section{Simulations}\label{sec: simulations}
We present three numerical examples to demonstrate the efficacy of the proposed methods. In the first scenario, we consider the example of reaching a set $S_1$ in a prescribed time $T$, and stay there for all future times, while also remaining in a $S_2$ at all times. Mathematically, the closed-loop trajectories are required to satisfy $x(t) \in S_1, \; \forall t\geq T,\quad x(t) \in S_2, \; \forall t\geq 0$, with $x(0)\in S_2$. The system dynamics are considered as
\begin{align*}
    \dot x_1 & = -x_2 + x_1^2 + x_1u\\
    \dot x_2 & = x_1 +x_2\tanh x_2+x_2u,
\end{align*}
where the state-vector is $x = [x_1 \; x_2]^T\in \mathbb R^2$ and the control input is $u\in \mathbb R$. Note that the open-loop trajectories for these dynamics diverge to infinity, i.e., the origin is unstable for the open-loop system. We choose $S_1 = \{x\; |\; \|x\|\leq 1\}$ and $S_2 = \{x\; |\; \frac{x_1^2}{9^2} + \frac{x_2^2}{0.9^2} \leq 1\}$ and $T = 10$ sec. Figure \ref{result:01} shows the closed-loop trajectories for four different initial conditions. The trajectories reach the set $S_1$ in  prescribed time and stay there at all the future times, while remaining in the set $S_2$ at all times.


\begin{figure}[!ht]
    \centering
        \includegraphics[width=0.95\columnwidth,clip]{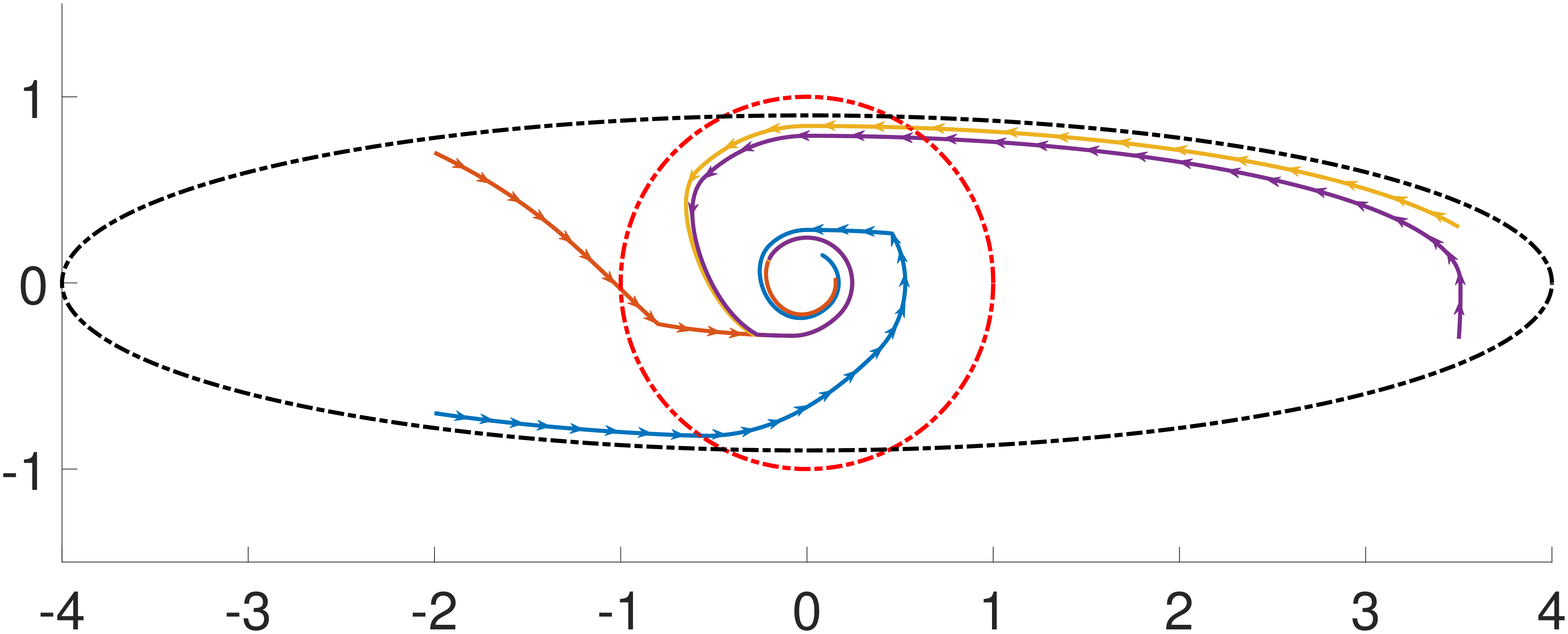}
    \caption{Scenario 1: Closed-loop trajectories.}\label{result:01}
\end{figure}

In second scenario, we take Example 2 from \cite{lindemann2019control} and use our proposed method to satisfy the STL specifications $\phi= \phi_1\land\phi_2$, where $\phi_1 = F_{[5,15]}(\|x-[10 \; 0]^T\|\leq 5)$ and $\phi_2 = G_{[5,15]}(\|x-[10 \; 5]^T\|\leq 10)$, with $S_1 = \{x\; |\; \|x-[10 \; 5]^T\|\leq 10\}$ and $S_2 = \{x\; |\; \|x-[10 \; 0]^T\|\leq 5\}$. The robot dynamics are modeled as $\dot x = u$ where $x,u\in \mathbb R^2$. We use $\|u\|\leq 10$ as the control input constraints. In order to translate the input constraint in the form of \eqref{input bound const}, we define $A_u = \begin{bmatrix}1& 0 \\ -1& 0\\  0 & 1\\ 0 &-1\end{bmatrix}$ and $b_u = \begin{bmatrix}7&7&7&7\end{bmatrix}^T$, so that $u_{x}, u_{y}\in [-7, \; 7]$. Figure \ref{scene:0 traj} shows the closed-loop trajectories for various initial conditions outside the set $S_1$. It can be seen that the trajectories reach the set $S_1$ and stay in $S_1$ at all future times, and then reach set $S_2$. Figure \ref{scene:0 u} shows the norm of the control input $u(t)$ with time. As can be seen from the figure, the control input jumps at $t = 5$ sec, when the system trajectories reach the set $S_1$. 

\begin{figure}[!ht]
    \centering
        \includegraphics[ width=0.85\columnwidth,clip]{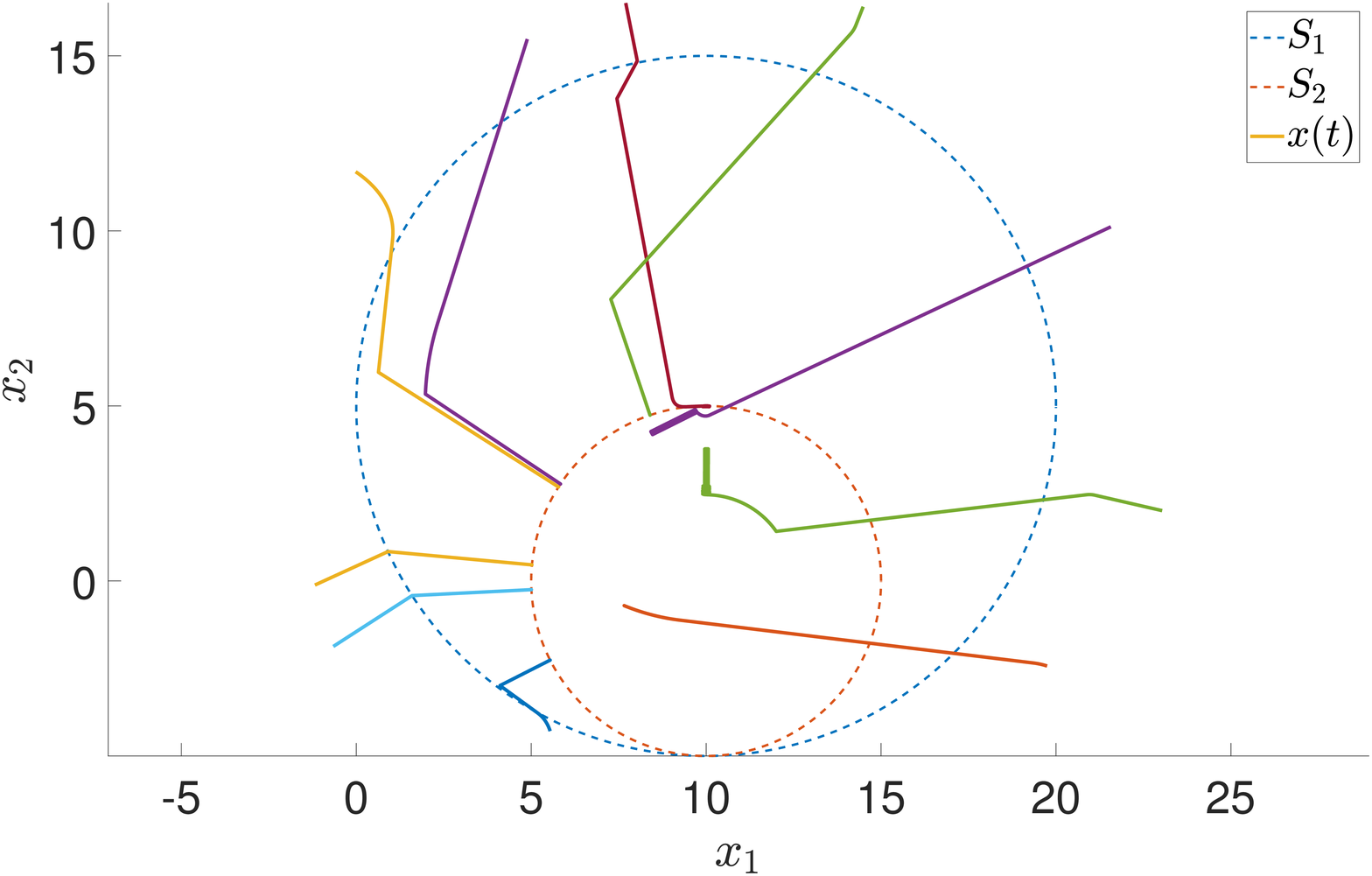}
    \caption{Scenario 2: Closed-loop trajectories for various initial conditions.}\label{scene:0 traj}
\end{figure}

\begin{figure}[!ht]
    \centering
        \includegraphics[ width=0.85\columnwidth,clip]{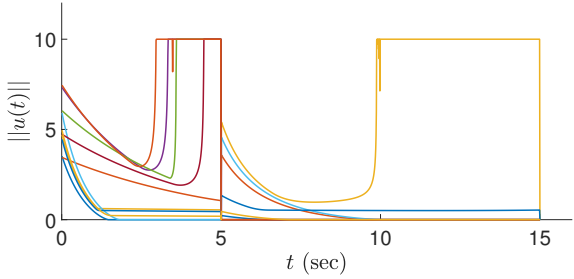}
    \caption{Scenario 2: Control input for various initial conditions.}\label{scene:0 u}
\end{figure}

In the third scenario, we present a method of constructing sets $S_i$ for applications such as robot motion planning, where the conditions of Theorem \ref{P1 solve suff} are not met. The closed-loop trajectories, starting from $x(0)\in C_1$, are required to satisfy the following spatio-temporal specifications$ (x_1,t) \models G_{[0,T_4]}\phi_s\land F_{[0, T_1]}\phi_2\land F_{[T_1, T_2]}\phi_3\land F_{[T_2, T_3]}\phi_4\land F_{[T_3, T_4]}\phi_1$, 
which is explained in details below (see Figure \ref{scene:1}):
 
\begin{figure}[!ht]
    \centering
        \includegraphics[ width=0.8\columnwidth,clip]{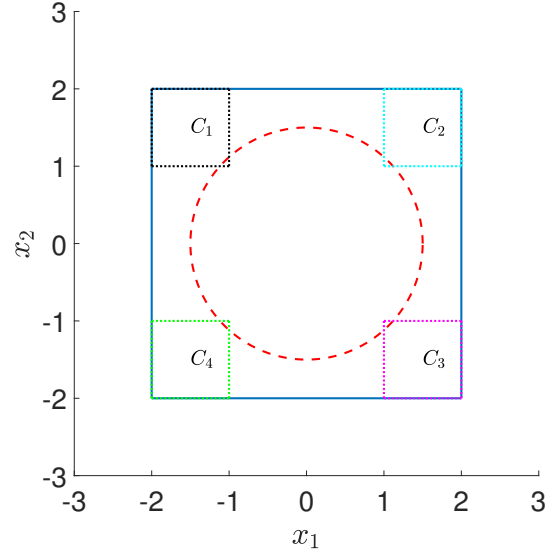}
    \caption{Scenario 3: Problem setting.}\label{scene:1}
\end{figure}

\begin{itemize}
    \item $x(t)\in S_s = \{x\; |\; \|x\|_1\leq 2 \bigcap\|x\|_2\geq 1\}$ for all $t\geq 0$, i.e., the closed-loop trajectories should stay inside the solid-blue square and outside the red-dotted circle at all times;
    \item Before a given $0<T_1<\infty$, $x(T_1)\in C_2 = \{x\; |\; \|x-[1.5 \; 1.5]^T\|_1\leq 0.5\}$;
    \item Before a given $T_1<T_2<\infty$, $x(T_2)\in C_3  = \{x\; |\; \|x-[1.5 \; -1.5]^T\|_1\leq 0.5\}$;
    \item Before a given $T_2<T_3<\infty$, $x(T_3)\in C_4 =  \{x\; |\; \|x-[-1.5 \; -1.5]^T\|_1\leq 0.5\}$;
    \item Before a given $T_3<T_4<\infty$, $x(T_4)\in C_1 = \{x\; |\; \|x-[-1.5 \; 1.5]^T\|_1\leq 0.5\}$.
\end{itemize}

This problem is an extended version of the case study considered in \cite{lindemann2017robust}. Note that the sets $C_i$ are not overlapping with each other, and the corresponding functions $h_i(x)$ are not continuously differentiable. 
Now, in order to be able to use QP-based formulation \eqref{QP u P2}, we need to find the sets $\bar S_i$ such that $\bar S_i\bigcap \bar S_{i+1}\neq\emptyset$. 

\begin{figure}[!htbp]
    \centering
        \includegraphics[ width=0.8\columnwidth,clip]{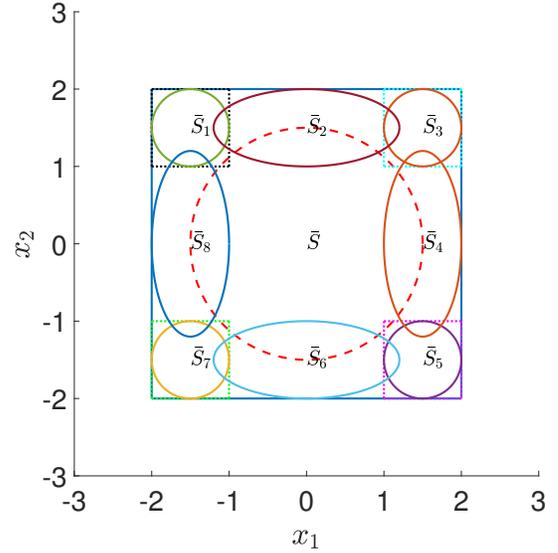}
    \caption{Scenario 3: Construction of sets $\bar S, \bar S_1, \cdots, \bar S_8$.}\label{scene:2}
\end{figure}

The set $\bar S = \{x\; |\; \|x\|\leq 1.5\}$ and sets $\bar S_i$ are defined as follows (see Figure \ref{scene:2}):
\begin{itemize}
    \item $\bar S_1 = \{x\; |\; \|(x-[-1.5 \; 1.5]^T)\|\leq 1\}$;
    \item $\bar S_2 = \{x\; |\; \|(x-[0 \; 1.5]^T)\|_{P_1}\leq 1\}$;
    \item $\bar S_3 = \{x\; |\; \|(x-[1.5 \; 1.5]^T)\|\leq 1\}$;
    \item $\bar S_4 = \{x\; |\; \|(x-[1.5 \; 0]^T)\|_{P_2}\leq 1\}$;
    \item $\bar S_5 = \{x\; |\; \|(x-[1.5 \; -1.5]^T)\|\leq 1\}$;
    \item $\bar S_6 = \{x\; |\; \|(x-[0 \; -1.5]^T)\|_{P_1}\leq 1\}$;
    \item $\bar S_7 = \{x\; |\; \|(x-[-1.5 \; -1.5]^T)\|\leq 1\}$;
    \item $\bar S_8 = \{x\; |\; \|(x-[-1.5 \; 0]^T)\|_{P_2}\leq 1\}$;
\end{itemize}
where $\|z\|_{P_1} = \sqrt{\frac{z_1^2}{1.2^2}+\frac{z_2^2}{0.5^2}}$ and $\|z\|_{P_2} = \sqrt{\frac{z_1^2}{0.5^2}+\frac{z_2^2}{1.2^2}}$. The problem can be re-formulated to design a control input $u(t)$ such that for $x(0)\in \bar S_1$, 
\begin{itemize}
    \item For a given $0<t_0<T_1$, $x(t_0)\in \bar S_2\setminus \bar S$;
    \item For a given $t_0<t_1\leq T_1$, $x(t_1)\in \bar S_3\setminus \bar S$;
    \item For a given $T_1<t_2<T_2$, $x(t_2)\in \bar S_4\setminus \bar S$;
    \item For a given $t_2<t_3\leq T_2$, $x(t_3)\in \bar S_5\setminus \bar S$;
    \item For a given $T_2<t_4<T_3$, $x(t_4)\in \bar S_6\setminus \bar S$;
    \item For a given $t_4<t_5\leq T_3$, $x(t_5)\in \bar S_7\setminus \bar S$;
    \item For a given $T_3<t_6<T_4$, $x(t_6)\in \bar S_8\setminus \bar S$;
    \item For a given $t_6<t_7\leq T_4$, $x(t_7)\in \bar S_1\setminus \bar S$,
\end{itemize}
which can be written as an STL formula as in \eqref{eq: robot STL req}.

\begin{figure*}[!ht]
\begin{align}\label{eq: robot STL req}
(x_1,t) & \models G_{[0,T_4]} \bar \phi_s\land G_{[0, t_0]}\bar \phi_1\land F_{[0, t_0]}\bar \phi_2\land  G_{(t_0,T_1]}\bar \phi_2\land F_{(t_0, T_1]}\bar \phi_3\land G_{(T_1,t_2]}\bar \phi_3\land F_{(T_1, t_2]}\bar \phi_4 \land G_{(t_2,T_2]}\bar \phi_4\land F_{(t_2, T_2]}\bar \phi_5\nonumber\\
& \land G_{(T_2,t_4]}\bar \phi_5\land F_{(T_2, t_4]}\bar \phi_6\land G_{(t_4,T_3]}\bar \phi_6\land F_{(t_4, T_3]}\bar \phi_7\land G_{(T_3,t_6]}\bar \phi_7\land F_{(T_3, t_6]}\bar \phi_8\land G_{(t_6,T_4]}\bar \phi_8\land  F_{(t_6, T_4]}\bar \phi_1.
\end{align}

\hrulefill
\end{figure*}

We can now use the formulation \eqref{QP u P2} to compute the control input. We use $|u|\leq 10$ as the input constraints and use the same approach as in scenario 2, to translate these constraints in the form of \eqref{input bound const}. The time constraints are chosen as $T_i = 2$ for $i \in \{1,2,3,4\}$ and $t_j = 1$ for $j\in \{t_0, t_1,\cdots ,t_7\}$. We choose $\mu = 5$, so that $\gamma_1 = 1.2$ and $\gamma_2 = 0.8$. Figures \ref{traj:1 1}-\ref{traj:1 2} illustrate the closed-loop position trajectories of the robot for one initial condition; it is evident that the robot position always remains in the safe set $S_s$, while visiting the sets $C_2, C_3, C_4$ and $C_1$ sequentially.Figure \ref{fig: ex3 u1 u2 dist} illustrates the control input trajectories and verifies that the control input constraint $\|u_i(t)\|\leq 10$ is satisfied at all times.

\begin{figure}[!ht]
	\centering
		\includegraphics[width=0.4\textwidth]{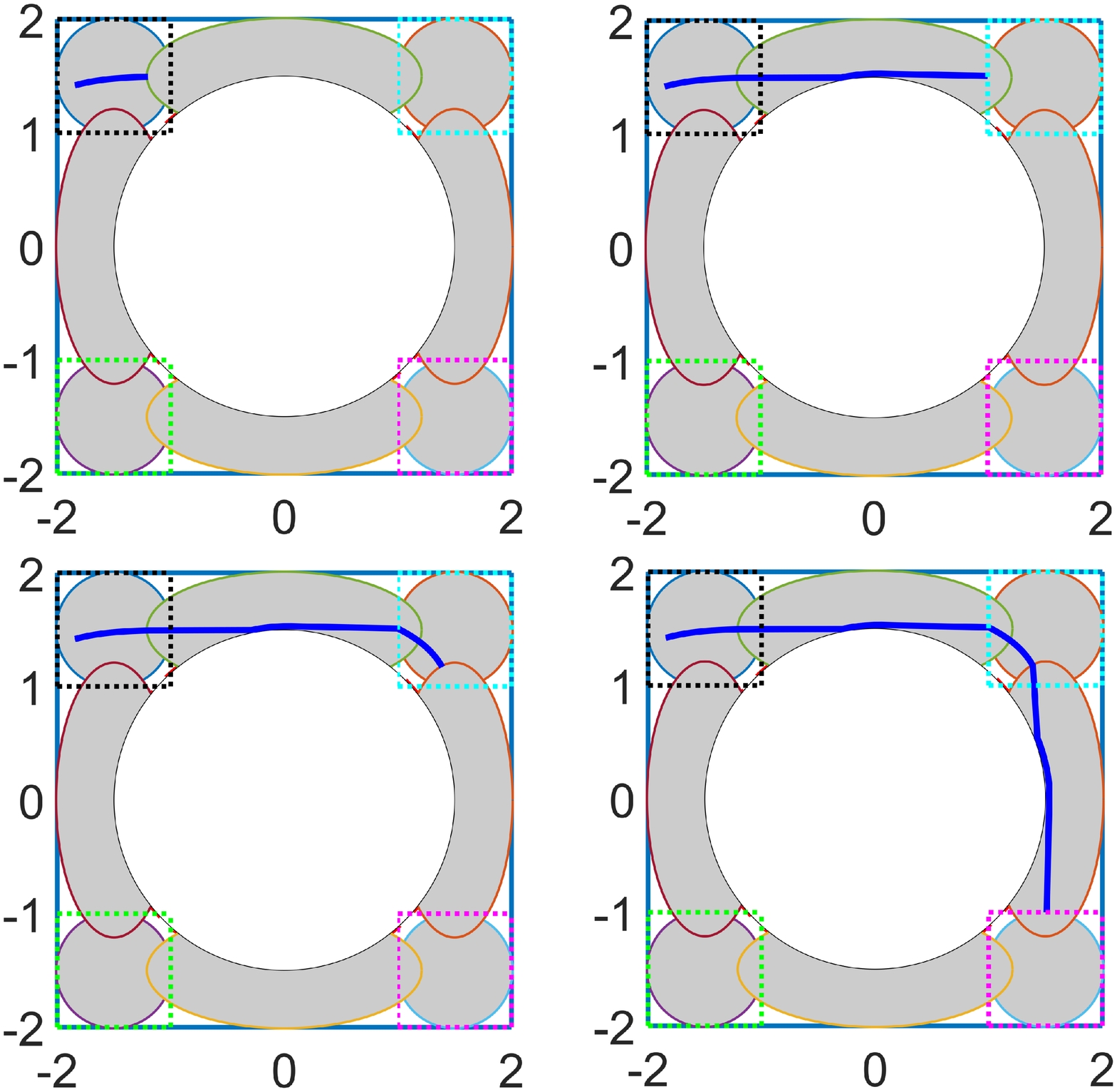}
		\caption{Scenario 3: Closed-loop trajectory: snapshot at $t = 1,2,3$ and $4$ sec.}\label{traj:1 1}  
	\vspace{1em} 
		\includegraphics[width=0.4\textwidth]{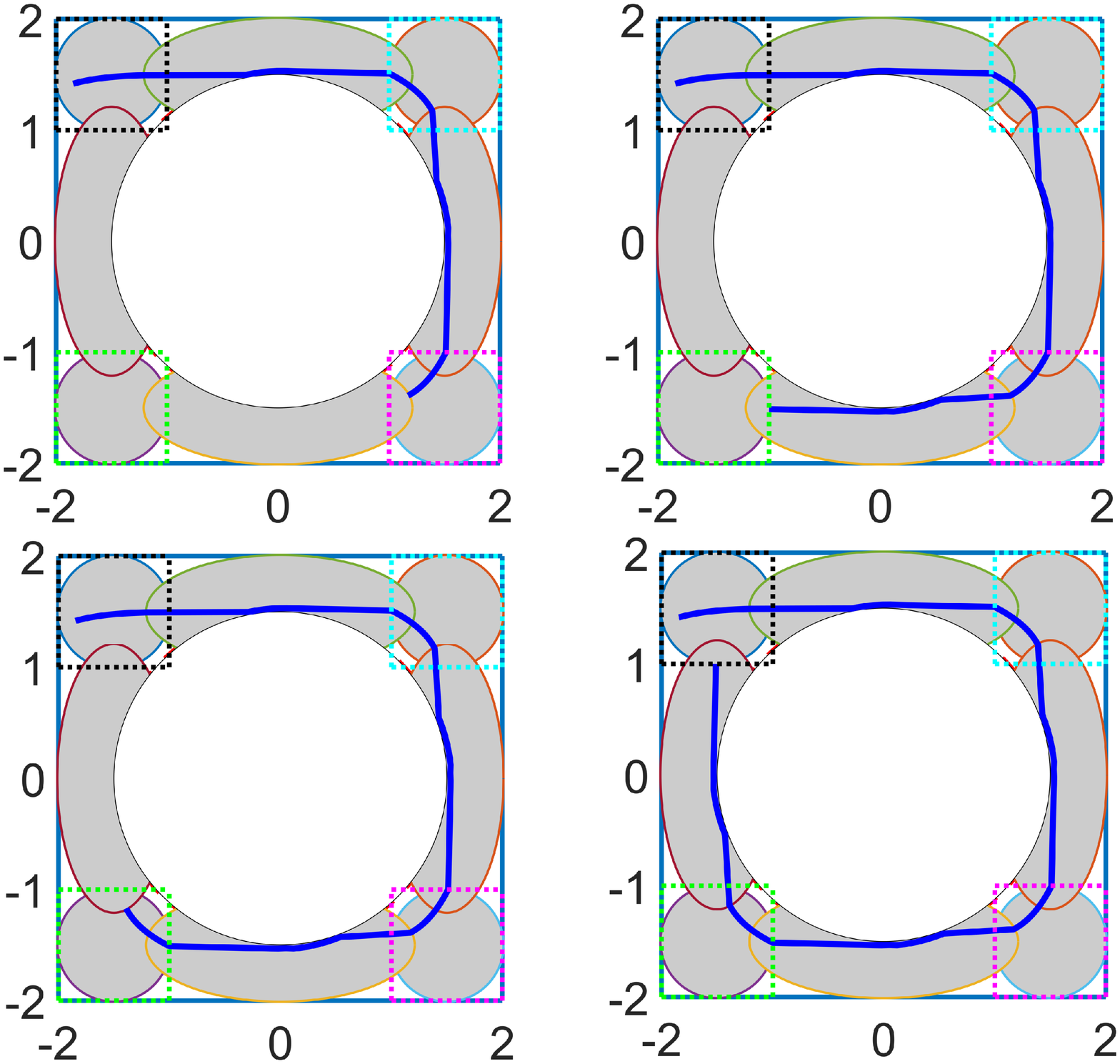}
		\caption{Scenario 3: Closed-loop trajectory: snapshot at $t = 5,6,7$ and $8$ sec.} \label{traj:1 2} 
\end{figure}

\begin{figure}[!ht]
    \centering
    \includegraphics[ width=1\columnwidth,clip]{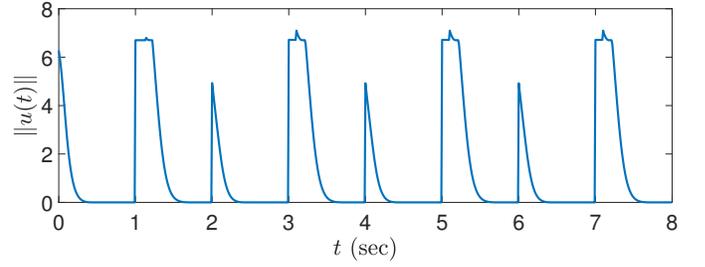}
    \caption{Scenario 3: Norm of the control input $\|u(t)\|$.}\label{fig: ex3 u1 u2 dist}
\end{figure}


\section{Direction for future work}\label{sec: discussion}
Note that Theorem \ref{FxTS reach set S}, Corollary \ref{Set S reach and inv} and Theorem \ref{P1 solve suff} are restrictive because of the following two reasons. First, it is needed that the functions $h(x)$ and $h_i(x)$ are CLF/CBF for the system \eqref{cont aff sys}, otherwise the respective inequalities used in the aforementioned results do not hold. Second, these results also need that the functions $h(x)$ and $h_i(x)$ are continuously differentiable. Although, this is a very common assumption in the literature (see \cite{li2018formally, ames2017control} and other similar work), this limits the choice of sets $S$ and $S_i$ that can be considered in the setup of Theorem \ref{FxTS reach set S} or Theorem \ref{P1 solve suff}. One approach is to use non-smooth analysis (e.g., \cite{sontag1995nonsmooth}) to formulate the constraints of \eqref{QP u P2}, so that the sets characterized by non-differentiable $h(x)$ can also be incorporated. Another plausible approach is to look for the CLF and the control input $u$ simultaneously. We propose sufficient conditions to characterize the CLF and the control input for Problem \ref{P reach S}.  

\begin{Proposition}\label{CLF relaxation thm}
If there exist continuously differentiable function $V$ and constants $a_1, a_2>0$, $\gamma_1>1$ and $0<\gamma_2<1$ satisfying $\frac{1}{a_1(\gamma_1-1)} + \frac{1}{a_2(1-\gamma_2)}\leq T$ such that the following holds for $x\notin S$
\begin{subequations}
\begin{align}
    h(x)\leq V(x) \leq &h(x)+c \label{hx Vx ineq}\\
    \inf_{u\in \mathcal U}\{L_fV(x) + L_gV(x)u & +a_1h(x)^{\gamma_1}+a_2h(x)^{\gamma_2}\}\leq 0, \label{fxts h relax ineq}
\end{align}
\end{subequations}
where $c\geq 0$, then the closed-loop trajectories of \eqref{cont aff sys} reach the set $S$ within prescribed time $T$ for all initial conditions.
\end{Proposition}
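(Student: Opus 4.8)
The plan is to mirror the proof of Theorem~\ref{FxTS reach set S}, but with the surrogate function $V$ playing the role that $h$ played there. First I would extract from \eqref{fxts h relax ineq} a control value $u(t)\in\mathcal U$ for which $L_fV+L_gVu\le -a_1h^{\gamma_1}-a_2h^{\gamma_2}$; along the resulting closed-loop trajectory of \eqref{cont aff sys} this yields the differential inequality $\dot V\le -a_1h^{\gamma_1}-a_2h^{\gamma_2}$. Since $x\notin S$ is equivalent to $h(x)>0$, both terms on the right are strictly positive as long as the trajectory has not yet reached $S$, so $V$ is strictly decreasing on that interval.

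Next I would use the sandwich \eqref{hx Vx ineq} to turn this into a self-contained comparison in a single scalar variable. From $h\le V$ we have $\{V\le 0\}\subseteq S$, so it would suffice to drive $V$ (hence $h$) down to the appropriate level within $T$; from $V\le h+c$, i.e. $h\ge V-c$, together with the monotonicity of $s\mapsto s^{\gamma_1}$ and $s\mapsto s^{\gamma_2}$, I would bound $\dot V\le -a_1(V-c)^{\gamma_1}-a_2(V-c)^{\gamma_2}$ on the region where $V\ge c$. Introducing $\tilde V=V-c$ gives $\dot{\tilde V}\le -a_1\tilde V^{\gamma_1}-a_2\tilde V^{\gamma_2}$, which is exactly the hypothesis of Theorem~\ref{FxTS TH} with the two exponents straddling $1$. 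The comparison lemma together with the settling-time bound of Theorem~\ref{FxTS TH} then controls the time spent with $V>c$ by $\frac{1}{a_1(\gamma_1-1)}+\frac{1}{a_2(1-\gamma_2)}\le T$.

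When $c=0$ the argument collapses to the one already given: $V=h$, the inequality reads $\dot h\le -a_1h^{\gamma_1}-a_2h^{\gamma_2}$, and Theorem~\ref{FxTS reach set S} applies verbatim to conclude reaching within $T$. I expect the genuine obstacle to be the case $c>0$: the decrease rate in \eqref{fxts h relax ineq} is written in terms of $h$ while the available certificate is $V$, and the two differ additively by up to $c$. The comparison above only certifies that $V$ is brought down to the level $c$ within time $T$, at which point \eqref{hx Vx ineq} guarantees $h\ge V-c=0$ but not $h\le 0$; closing the remaining gap, namely showing that the trajectory actually enters $S=\{x\mid h(x)\le 0\}$ rather than merely a neighbourhood $\{V\le c\}$ within the prescribed horizon, is the delicate step. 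Resolving it requires arguing that near $\partial S$ the $\gamma_2$-term (with $0<\gamma_2<1$) still forces $h$ itself, and not only the surrogate $V$, to the boundary in finite time, so that the settling-time estimate is ultimately read against $h$; this is where I would concentrate the careful analysis.
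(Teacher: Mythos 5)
Your comparison argument is correct as far as it goes, and it goes exactly as far as the hypotheses allow: setting $\tilde V = V - c$ and using $h \ge V - c$ on the region $\{V > c\}$, Theorem~\ref{FxTS TH} plus the comparison lemma give $V(x(t)) \le c$, hence $h(x(t)) \le c$, within $\frac{1}{a_1(\gamma_1-1)}+\frac{1}{a_2(1-\gamma_2)} \le T$. But the Proposition asserts $h(x(t)) \le 0$ within $T$, and your proposal explicitly leaves the passage from $\{V \le c\}$ into $S$ as ``the delicate step'' without supplying it. That is a genuine gap, not a finishing touch, because the hypotheses do not support the missing step: once $V \le c$, the sandwich \eqref{hx Vx ineq} pins $h$ only to the interval $[V-c,\,V]$, and \eqref{fxts h relax ineq} constrains $\dot V$, never $\dot h$, so nothing forces $h$ itself to zero --- the $\gamma_2$-term drives the surrogate, not the boundary function. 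Concretely, take $\dot x = u$ on $\mathbb R^2$ with $\mathcal U = \mathbb R^2$, $h(x)=x_1$, and $V(x) = x_1 + c\,s(x_2)$ where $s$ is smooth, strictly decreasing, with values in $(0,1)$: then \eqref{hx Vx ineq} holds globally, \eqref{fxts h relax ineq} holds for all $x\notin S$, and the admissible feedback $u=(0,u_2)$ with $c\,s'(x_2)\,u_2 = -\bigl(a_1 h^{\gamma_1}+a_2 h^{\gamma_2}\bigr)$ realizes the required decrease of $V$ while $h(x(t)) \equiv x_1(0) > 0$ for as long as the solution exists; taking $c$ large makes that interval exceed $T$, so the trajectory never reaches $S$ by the prescribed time. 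Hence for $c>0$ the conclusion fails without further assumptions, and the ``careful analysis'' you defer cannot succeed in this generality.

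For comparison, the paper's own proof stumbles at precisely the step you flagged: it asserts that \eqref{hx Vx ineq} ``implies $V(x) \le h(x)$'' and then uses $a_1V^{\gamma_1}+a_2V^{\gamma_2} \le a_1h^{\gamma_1}+a_2h^{\gamma_2}$ to reduce to Theorem~\ref{FxTS TH} applied to $V$; but the stated sandwich reads $h \le V \le h+c$, so that key inequality runs in the wrong direction whenever $V > h$. The paper's argument is therefore valid only in the degenerate case $c=0$, where $V \equiv h$ and the Proposition collapses to Theorem~\ref{FxTS reach set S} --- exactly the case you do handle correctly. So your diagnosis of where the difficulty sits is accurate, indeed more candid than the paper's treatment, but as a proof of the stated Proposition your proposal (like the paper's) is incomplete; the statement itself needs repair for $c>0$, e.g.\ by requiring $V-h$ to be constant, or by weakening the conclusion to reaching $\{x \mid h(x)\le c\}$ within the prescribed time $T$.
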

\begin{proof}
If $h(x)$ is a smooth function, one can choose $c = 0$, so that $V(x) = h(x)$. Note that \eqref{hx Vx ineq} implies that $V(x)\leq h(x)$ for $x\notin S$. If there exists a control input $u$, such that \eqref{fxts h relax ineq} holds, then from \eqref{hx Vx ineq}, we obtain that 
\begin{align*}
    \dot V+a_1V(x)^{\gamma_1}+a_2V(x)^{\gamma_2} &\leq \dot V+ a_1h(x)^{\gamma_1}+a_2h(x)^{\gamma_2}\\
    &\leq 0.
\end{align*}
Hence, using Theorem \ref{FxTS TH}, we obtain that $V(x(t)) =0$ for all $t\geq \bar T$, where $\bar T\leq  \frac{1}{a_1(\gamma_1-1)} + \frac{1}{a_2(1-\gamma_2)} \leq T$. Now, from \eqref{hx Vx ineq}, we know that for $V(x) = 0 \implies h(x)\leq 0$, which implies $x\in S$. 
\end{proof}
We illustrate, via a simple example, how Proposition \ref{CLF relaxation thm} can be used for the case when $h(x)$ is non-smooth. Consider the case when the set $S$ in Problem \ref{P reach S} is defined as $S = \{x\; |\; \|x\|_1\leq 1\}$, i.e., using the 1-norm of $x$, so that $S$ is a square. Using the fact that $S_n = \{x\; |\; x_1^{2n}+x_2^{2n}-1\} \rightarrow S$, as $n\rightarrow \infty$, one can choose $V = x_1^{2n} + x_2^{2n}-1$, for large positive integer $n$ and look for $n$, along with $u, a_1, a_2>0$, $\gamma_1>1$ and $0<\gamma_2<1$ so that conditions of Proposition \ref{CLF relaxation thm} hold. A similar set of sufficient conditions can be derived for Theorem \ref{P1 solve suff}, which would allow a larger class of problems to be solved. It is part of our future investigations to study methods to solve for $V$ and $u$, simultaneously, in an efficient way. In future, we would also like to study properties of the system dynamics and the functions $h(x), h_i(x)$, so that the resulting closed-loop trajectories are smooth.  

\section{Conclusions}\label{sec: conclusion}
In this paper, we considered the problem of trajectory planning under spatio-temporal, and control input constraints. We defined a new class of CLF, called PT CLF, to guarantee that the closed loop trajectories reach a given set within the prescribed time. We formulated a QP to find a control input that guarantees prescribed time convergence. Then, we considered a general problem of control synthesis under multiple spatiotemporal objectives. We first presented sufficient conditions for the existence of a control input in terms of PT CLF and CBF. Then, we presented a QP based formulation to efficiently compute the control input that guarantees safety and prescribed time convergence in the presence of control input constraints.

\bibliographystyle{IEEEtran}
\bibliography{myreferences}

\end{document}